\documentclass[letterpaper, 10 pt, conference]{ieeeconf} 
\IEEEoverridecommandlockouts  
\usepackage{graphics,graphicx} 
\usepackage[dvipsnames]{xcolor}
\usepackage{epsfig} 
\usepackage{amsmath} 
\usepackage{amssymb}  
\usepackage{amsfonts}
\usepackage{amsthm}
\usepackage{mathtools}  
\usepackage{optidef,booktabs}
\usepackage{cite}
\usepackage{url}
\usepackage{lipsum}
\usepackage{algorithm}
\usepackage{algpseudocode}
\usepackage[utf8]{inputenc} 
\usepackage{subcaption}
\usepackage{tabularray}
\usepackage{tikz}
\usetikzlibrary{arrows.meta, positioning, calc}
\usepackage[american,cuteinductors,smartlabels]{circuitikz}
\let\labelindent\relax
\usepackage[shortlabels]{enumitem}
\usepackage{soul}
\usepackage{hyperref} 
\hypersetup{ 		
	colorlinks=true,        
	linkcolor=blue,
	urlcolor=blue,
	citecolor=blue    		
}

\expandafter\def\expandafter\normalsize\expandafter{%
    \normalsize%
    \setlength\abovedisplayskip{1pt}%
    \setlength\belowdisplayskip{1pt}%
    \setlength\abovedisplayshortskip{1pt}%
    \setlength\belowdisplayshortskip{1pt}%
}
\UseTblrLibrary{amsmath}

\newtheorem{proposition}{Proposition}
\theoremstyle{remark}
\newtheorem{remark}{Remark}

\theoremstyle{definition}
\newtheorem{lemma}{Lemma}
\newtheorem{theorem}{Theorem}
\newtheorem{corollary}{Corollary}
\theoremstyle{definition}
\newtheorem{definition}{Definition}

\DeclareMathOperator*{\dist}{d}

\providecommand{\norm}[1]{\lVert#1\rVert}

\providecommand{\norm}[1]{\lVert#1\rVert}

\newcommand{\R}{\mathbb{R}}

\newcommand{\Ltwo}{\mathcal{L}_2}
\newcommand{\Ltwoe}{\mathcal{L}_{2e}}
\newcommand{\inner}[2]{\langle #1, #2 \rangle}
\newcommand{\SG}{\mathrm{SG}}
\newcommand{\SGe}{{\mathrm{SG}_e}}

\begin{document}

\title{Scaled Graph Containment for Feedback Stability:  Soft–Hard Equivalence and Conic Regions}

\author{Eder~Baron-Prada, Julius P. J. Krebbekx, Adolfo Anta and~Florian Dörfler
 \thanks{Eder Baron is with the Austrian Institute of Technology, 1210 Vienna, Austria, and also with the Automatic Control Laboratory, ETH Zurich, 8092 Z\"urich, Switzerland. (e-mail: ebaron@ethz.ch)}
 \thanks{Julius P. J. Krebbekx is with the Control Systems group, Department of Electrical Engineering, Eindhoven University of Technology, The Netherlands (email: j.p.j.krebbekx@tue.nl).}
 \thanks{Adolfo Anta is with the Austrian Institute of Technology, Vienna 1210, Austria (e-mail: adolfo.anta@ait.ac.at).}
 \thanks{Florian Dorfler is with the Automatic Control Laboratory, ETH Zürich, Zürich 8092, Switzerland (e-mail: dorfler@ethz.ch).}
}

\maketitle

\begin{abstract}
Scaled graphs (SGs) offer a geometric framework for feedback stability analysis. This paper develops containment conditions for SGs within multiplier-defined regions, addressing both circular and conic geometries. For circular regions, we show that soft and hard SG containment are equivalent whenever the associated multiplier is positive-negative. This enables hard stability certification from soft computations alone, bypassing both the positive semidefinite storage constraint and the homotopy condition of existing methods. Numerical experiments on systems with up to 300 states demonstrate computational savings of 15–44\% for the circular containment framework. We further characterize which conic regions are hyperbolically convex, a condition our frequency-domain certificate requires, and demonstrate that such regions provide tighter SG bounds than circles whenever the operator SG is nonsymmetric.
\end{abstract}


\IEEEpeerreviewmaketitle
\section{Introduction}
\label{sec:intro}

Graphical methods have been central to feedback stability analysis since the work of Nyquist and Bode, offering intuitive geometric certificates.  Extending this paradigm to MIMO LTI and nonlinear systems has been a long-standing goal. Scaled graphs (SGs), introduced  in~\cite{ryu2022large} and developed for feedback analysis in~\cite{Chaffey_2023}, address this gap by representing operators as subsets of the complex plane that simultaneously encode gain and phase.

Several SG formulations have been proposed, including signed~\cite{Sebastian2025_signed}, soft~\cite{Chaffey_2023}, and hard~\cite{chen2025softhardSRG} variants. This study focuses on the latter two. The \emph{soft SG} is defined on $\mathcal{L}_2$ and characterizes the asymptotic behavior of $\mathcal{L}_2$-stable operators; the \emph{hard SG} is defined on $\mathcal{L}_{2e}$, capturing finite-horizon behavior and accommodating persistent and unbounded trajectories.

Two fundamental limitations currently constrain SG-based stability analysis. The first is \emph{geometric}; existing SG containment methods~\cite{degroot2025dissipativity,nauta2025computable,krebbekx2025computing} rely on static multiplier regions, which describe only circular regions (disks, disk complements, or half-planes). Some specifications of practical interest, such as elliptical exclusion zones  require \emph{conic} containment regions that lie outside this class\cite{Gupta_2023_elipsoidal}. The second limitation is \emph{computational}; soft SGs are efficiently constructed via LMIs~\cite{degroot2025dissipativity} or frequency-domain sampling~\cite{Baron2025SRG,krebbekxGraphicalAnalysisNonlinear2025}, but certifying stability from them requires checking separation conditions over infinitely many scaled sets~\cite{Chaffey_2023}, a condition that is numerically expensive to verify. Hard SGs yield a simple geometric separation test~\cite{chen2025softhardSRG}, but their computation demands a semidefinite constraint $P \succeq 0$ that becomes costly for large-scale systems~\cite{krebbekx2025computing,nauta2025computable,degroot2025dissipativity}.

This paper addresses both limitations. For the geometric limitation, we extend the containment framework from circular regions to general conic sections, following the frequency-domain inequality framework of~\cite{IwasakiHara2005}. We characterize exactly which conic regions are hyperbolically convex, the geometric property required for SG containment, and provide a frequency-domain certification condition for conic containment. For the computational limitation, we use the lens of Integral Quadratic Constraints  (IQCs)~\cite{Megretski1997,Carrasco_2015_IQCsseparation}. We show that when the circular IQC multiplier is  \emph{positive-negative}~\cite{Carrasco_2018,Seiler2015}, soft and hard SG containment in a multiplier-defined region are equivalent for $\mathcal{L}_2$-stable systems. This enables a hybrid certification pipeline: compute SG regions via efficient soft LMIs, then certify stability via the simple hard separation condition, bypassing both the $P \succeq 0$ constraint and the homotopy sweep. 

The practical implications are significant. Numerical experiments on LTI systems with up to 300 states demonstrate computational savings of 15--44\% from the soft LMI relaxation alone, enabling SG-based stability certification for large-scale systems, including power systems~\cite{baronprada2026powersystems} and networked multi-agent systems~\cite{Baron2025decentralized}, where direct hard SG computation is currently prohibitive. Beyond computational efficiency, when the SG is elongated, a conic region can be shaped to fit it more closely than any disk: since stability certificates are derived from the containment region, tighter regions directly translate into less conservative stability margins.

\section{Preliminaries}\label{sec:prelim}

\subsection{Signal Spaces, Systems, and Scaled Graphs}

Let $\Ltwo^n$ denote the space of square-integrable signals $u:\R_{\ge 0}\to\R^n$ with norm $\norm{u}^2 := \int_0^\infty u(t)^\top u(t)\,dt$ and inner product $\inner{u}{y} := \int_0^\infty u(t)^\top y(t)\,dt$.  The extended space $\Ltwoe^n := \{ u:\R_{\ge 0}\to\R^n \mid \mathcal{P}_T u \in \Ltwo^n,\, \forall T \ge 0 \}$ accommodates signals with unbounded energy, where $\mathcal{P}_T$ truncates to $[0,T]$.  A system $H:\Ltwoe^n \to \Ltwoe^n$ is \emph{causal} if $\mathcal{P}_T(Hu) = \mathcal{P}_T(H(\mathcal{P}_T u))$ for all $T\ge 0$~\cite{zhou1998}.

We also consider finite-dimensional LTI systems with zero initial condition, i.e., $x(0)=0$, and $\dot{x}(t) = Ax(t) + Bu(t), \;y(t) = Cx(t) + Du(t)$, where $x \in \mathbb{R}^m$ is the state vector, $u \in \mathbb{R}^n$ is the input, and $y \in \mathbb{R}^n$ is the output, with appropriately dimensioned matrices $A$, $B$, $C$, and $D$. The system has transfer function $H(s) = C(sI-A)^{-1}B + D$, where $x \in \mathbb{R}^m$, $u,y \in \mathbb{R}^n$.  The system is \emph{$\Ltwo$-stable} if $\norm{H}_\infty := \sup_\omega \bar\sigma(H(\textup{j}\omega)) < \infty$.

SGs generalize input--output pairs to gain--phase points in~$\mathbb{C}\cup {\infty}$~\cite{Chaffey_2023}.  For $u,y\in\Ltwo^n$ with $u\neq 0$, define

\begin{align} \label{eqn:gain_phase}
    \rho(u,y) := \tfrac{\norm{y}}{\norm{u}}, \quad \theta(u,y) := \arccos\!\left(\tfrac{\inner{u}{y}}{\norm{u}\norm{y}}\right),
\end{align}
with $\theta(u,y)=0$ if $y=0$.  The \emph{soft SG} of $H:\Ltwo\to\Ltwo$ is

\begin{align*}
\SG(H):= \left\{ \rho(u,y)\,e^{\pm\mathrm{j}\theta(u,y)}\mid \forall u\in\Ltwo\setminus\{0\},\, y= Hu \right\},
\end{align*}
and the \emph{hard SG} of $H:\Ltwoe\to\Ltwoe$, using truncated gain $\rho_T := \rho(\mathcal{P}_T u,\mathcal{P}_T y)$ and phase $\theta_T := \theta(\mathcal{P}_T u,\mathcal{P}_T y)$, is

\begin{align*}
    \SGe(H)\!:=\!\left\{\rho_T(u,y)e^{\pm\mathrm{j}\theta_T(u,y)}\!\mid\! \forall u\!\in\Ltwoe, y= Hu, \forall T\!>\!0 \!\right\}.
\end{align*}

Let $\mathbb{S}^n$ denote the real symmetric $n \times n$ matrices. The open upper half-plane is $\mathbb{C}_+ := \{z \in \mathbb{C} : \operatorname{Im}\{z\} > 0\}$. A set $\mathcal{R} \subset \mathbb{C}_+$ is \emph{hyperbolically convex} (h-convex) if for every pair of points in $\mathcal{R}$, the geodesic connecting them lies in $\mathcal{R}$ (See \cite{pates2021scaled} for details).

\subsection{IQCs and $J$-Spectral Factorization}
\label{subsec:iqc}

A system $H:\Ltwo^n\to\Ltwo^n$ satisfies a \emph{soft IQC} defined by a Hermitian constant multiplier $\Pi\in\mathbb{S}^{2n}$ if
\begin{align}\label{eq:soft_iqc}
\int_0^\infty
\begin{bmatrix} y \\ u \end{bmatrix}^\top
\Pi 
\begin{bmatrix} y \\ u \end{bmatrix}
dt \ge 0, \, \forall\, u \in \Ltwo^n,\; y=Hu.
\end{align}

It satisfies a \emph{hard IQC} if for any $T \ge 0$
\begin{align}\label{eq:hard_iqc}
\int_0^T
\begin{bmatrix} y \\ u \end{bmatrix}^\top
\Pi 
\begin{bmatrix} y \\ u \end{bmatrix}
dt \ge 0, \, \forall\, u \in \Ltwoe^n,\; y=Hu.
\end{align}

The passage from soft to hard IQCs is governed by the factorization structure of~$\Pi$\cite{Megretski1997}.

\begin{definition}[$J$-Spectral Factorization\cite{Seiler2015}]
\label{def:j_spectral}
Let $\Pi = \Pi^\top \in \mathbb{S}^{2n}$ be Hermitian. 
We say that $\Pi$ admits a \emph{$J$-spectral factorization} if there exists an invertible matrix $\Psi \in \mathbb{R}^{2n\times 2n}$ such that $ \Pi = \Psi^\top J \Psi,$ where $J := \operatorname{diag}(I_n,-I_n)$.
\end{definition}


\begin{definition}[Positive-Negative Matrix]
\label{def:pos-neg}
Let $\Pi = \Pi^\top \in \mathbb{C}^{2n\times 2n}$ be partitioned as $\Pi = \begin{bmatrix} \Pi_{11} & \Pi_{12} \\\Pi_{21} & \Pi_{22}\end{bmatrix},$ where $\Pi_{ij} \in \mathbb{R}^{n\times n}$. 
We say that $\Pi$ is \emph{positive-negative} if there exists $\varepsilon > 0$ such that $\Pi_{22} \succeq \varepsilon I_n$ and $\Pi_{11} \preceq -\varepsilon I_n.$
\end{definition}

Every positive-negative multiplier admits a $J$-spectral factorization $\Pi = \Psi^\top J\, \Psi$~\cite{Seiler2015}; moreover, this factorization is doubly hard~\cite{Carrasco_2018}, which is required for the following soft--hard equivalence result based on~\cite{Seiler2015,Carrasco_2018}.

\begin{lemma}[Soft--hard IQC equivalence]
\label{lem:iqc_equiv}
Let $H:\Ltwo^n\to\Ltwo^n$ be a stable causal operator and let  $\Pi\in\mathbb{S}^{2n}$ be a constant positive-negative multiplier. Then $H$ satisfies~\eqref{eq:soft_iqc} if and only if $H$ satisfies~\eqref{eq:hard_iqc}.
\end{lemma}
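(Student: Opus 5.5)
The plan is to prove the two implications separately. The hard-to-soft direction is elementary. The soft-to-hard direction uses causality together with the sign structure of the multiplier, namely $\Pi_{22}\succeq \varepsilon I$ and $\Pi_{11}\preceq -\varepsilon I$. Working directly with the blocks keeps the argument self-contained. The $J$-spectral factorization $\Pi=\Psi^\top J\Psi$ explains why the argument works, but we only use it implicitly.

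\emph{Hard $\Rightarrow$ soft.} Take $u\in\Ltwo^n\subset\Ltwoe^n$ and set $y=Hu$. Because $H$ is stable, $y\in\Ltwo^n$. The integrand in \eqref{eq:hard_iqc} is then integrable on $[0,\infty)$, so by dominated convergence the truncated integrals converge to the full integral as $T\to\infty$. Each truncated integral is nonnegative, hence so is the limit, which is \eqref{eq:soft_iqc}.

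\emph{Soft $\Rightarrow$ hard.} Fix $u\in\Ltwoe^n$ and $T>0$, and set $u_T:=\mathcal{P}_T u\in\Ltwo^n$ and $y_T:=Hu_T\in\Ltwo^n$. Here $H$ is understood as extended causally to $\Ltwoe$. By causality, $y_T$ agrees with $Hu$ on $[0,T]$. On $(T,\infty)$ the input $u_T$ vanishes, so the integrand of \eqref{eq:soft_iqc} reduces to $y_T^\top\Pi_{11}y_T\le -\varepsilon|y_T|^2\le 0$. Splitting the soft IQC applied to $u_T$ at time $T$ gives
\begin{align*}
0\le \int_0^\infty \begin{bmatrix} y_T\\ u_T\end{bmatrix}^\top\Pi\begin{bmatrix} y_T\\ u_T\end{bmatrix}dt
= \int_0^T \begin{bmatrix} y\\ u\end{bmatrix}^\top\Pi\begin{bmatrix} y\\ u\end{bmatrix}dt + \int_T^\infty y_T^\top \Pi_{11} y_T\,dt .
\end{align*}
The tail term is nonpositive, so the integral over $[0,T]$ is nonnegative. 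This is exactly \eqref{eq:hard_iqc}.

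The only delicate point is the interpretation of $H$ on $\Ltwoe$. The statement defines $H$ on $\Ltwo$, so we must check that its causal extension to $\Ltwoe$, given by $\mathcal{P}_T Hu := \mathcal{P}_T H\mathcal{P}_T u$, is well defined; this follows from causality. The rest of the argument is short because the multiplier is constant. With a constant $\Pi$, the post-$T$ tail involves only the output block $\Pi_{11}$, so the doubly-hard factorization machinery of \cite{Carrasco_2018,Seiler2015} is not needed. Only the condition $\Pi_{11}\preceq 0$ is used; the condition $\Pi_{22}\succeq\varepsilon I$ is not needed for this lemma.
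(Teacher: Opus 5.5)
Your argument is correct. For soft $\Rightarrow$ hard it takes a different route from the paper; the hard $\Rightarrow$ soft limit argument is the same in both.

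The paper gets the truncated inequality in two cited steps. First it invokes the $J$-spectral factorization $\Pi=\Psi^\top J\Psi$ of a positive-negative multiplier \cite[Lemma~4]{Seiler2015}. It then applies \cite[Lemma~2]{Seiler2015} with zero storage, which it justifies by the doubly-hard property in \cite{Carrasco_2018}.

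You instead apply the soft IQC to $\mathcal{P}_T u$ and use causality to match the output with $Hu$ on $[0,T]$. After $T$ only the nonpositive term $y_T^\top\Pi_{11}y_T$ survives. This is the state-free special case of the zero-extension argument behind those cited lemmas, written out directly.

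What each route buys:
\begin{itemize}
\item The paper's route ties the lemma to dynamic-multiplier theory. There a stable $J$-spectral factorization and a storage term are genuinely needed, and positive-negativity is the natural hypothesis.
\item Your route is self-contained and sharper in the static setting, because it uses only $\Pi_{11}\preceq 0$.
\end{itemize}

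Combined with Lemmas~\ref{lem:soft_connection}--\ref{lem:hard_connection}, your argument extends the soft--hard equivalence of Theorem~\ref{thm:main} in two ways. It holds for interior-disk multipliers $\Pi_{\mathrm{int}}(c,r)$ without the restriction $r>|c|$ from Proposition~\ref{prop:circular}. It also covers half-plane multipliers with $\Pi_{11}=0$, such as passivity. None of these multipliers is positive-negative.
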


\begin{proof}
$(\Rightarrow)$\; Since $\Pi$ is constant and positive-negative, it admits a $J$-spectral factorization $\Pi = \Psi^\top J\, \Psi$ where $\Psi\in\mathbb{R}^{2n\times 2n}$ is a constant invertible matrix~\cite[Lemma~4]{Seiler2015}.  Setting $z(t) := \Psi\Bigl[\begin{smallmatrix}(Hu)(t)\\u(t)\end{smallmatrix}\Bigr]$, the pointwise identity $\bigl[\begin{smallmatrix}Hu\\u\end{smallmatrix}\bigr]^\top \Pi\,\bigl[\begin{smallmatrix}Hu\\u\end{smallmatrix}\bigr] = z^\top J\, z$ yields
\begin{align}\label{eq:factor_identity}
\int_0^T \begin{bmatrix}Hu\\u\end{bmatrix}^\top \Pi\,\begin{bmatrix}Hu\\u\end{bmatrix}\,dt 
=\! \int_0^T z^\top J\, z\, dt, \quad \forall\, T \leq \infty.
\end{align}

The soft IQC~\eqref{eq:soft_iqc} is therefore equivalent to  $\int_0^\infty z^\top J\, z\, dt \geq 0$.  Since $\Psi$ is constant the factorization $(\Psi,J)$ has no internal states; applying \cite[Lemma~2]{Seiler2015} with the storage $\overline{M}=\underline{M}=0$ (which holds as $\Pi$ is positive-negative \cite[Thm~4.1]{Carrasco_2018} \cite[Thm~4]{Seiler2015}), the truncated inequality $\int_0^T z^\top J\,z\,dt \geq 0$ holds for all $T\geq 0$ and all $u\in\Ltwoe^n$.  It follows from~\eqref{eq:factor_identity} that~\eqref{eq:hard_iqc} holds.

$(\Leftarrow)$\; Since $H$ is $\Ltwo$-stable, for every $u\in\Ltwo^n$ both $u$ and $Hu$ lie in $\Ltwo^n$, so the integrand in~\eqref{eq:hard_iqc} is absolutely integrable and passing $T\to\infty$ yields~\eqref{eq:soft_iqc}.
\end{proof}

\section{The SG--IQC Connection}\label{sec:connection}

This section formalizes the relationship between IQCs and SGs. Whereas IQCs describe operators via quadratic inequalities on signals, SGs represent these same constraints geometrically as regions in the complex plane.  The development here builds upon the framework established in~\cite{degroot2025dissipativity}.

\subsection{From IQCs to regions in the complex plane}

Consider constant multipliers $\Pi \in \mathbb{S}^{2}$ of the form
\begin{align}\label{eq:Pi_scalar}
\Pi = \begin{bmatrix} a & b \\ b & c \end{bmatrix}, \quad a, b, c \in \mathbb{R}.
\end{align}
By normalizing the dissipation inequality by the input signal energy, the IQC can be interpreted as a geometric constraint on the gain--phase pairs of the operator encoded in the multiplier.  These pairs are precisely the elements represented by the SG.

\begin{definition}[Multiplier Region]\label{def:S_Pi} 
Given $\Pi$ of the form~\eqref{eq:Pi_scalar}, the associated region in the complex plane is defined by \cite{degroot2025dissipativity} 
\begin{align}\label{eq:S_Pi}
\mathcal{S}(\Pi) :=& \left\{ z \in \mathbb{C} \;\middle|\; \begin{bmatrix} z \\ 1 
\end{bmatrix}^* \Pi \begin{bmatrix} z \\ 1 \end{bmatrix} \geq 0 \right\}\\ 
=& \left\{ z \in \mathbb{C} \mid a|z|^2 + 2b \Re\{z\} + c \geq 0 
\right\},\nonumber
\end{align}
which represents a disk or the exterior of a  disk~\cite{degroot2025dissipativity}. The interior (closed) disc is $\mathbb{D}^{\mathrm{int}}(c,r) := \{ z \in \mathbb{C} \mid |z-c| \leq r \}$.
The exterior (open) disc is $\mathbb{D}^{\mathrm{ext}}(c,r) := \{ z \in \mathbb{C} \mid |z-c| > r \}$.

\end{definition}

\subsection{IQC--SG correspondence}

The correspondence between static IQCs and SGs was established in \cite{degroot2025dissipativity}. We recall the results for both soft and hard IQCs.

\begin{lemma}[Soft IQC--SG Correspondence {\cite[Lemma~4]{degroot2025dissipativity}}]\label{lem:soft_connection}
Let $H : \Ltwo^n \to \Ltwo^n$ be an $\Ltwo$-stable system and let $\Pi$ as in~\eqref{eq:Pi_scalar}. Then $H$ satisfies the soft IQC defined by $\Pi$, i.e.,
\begin{align*}
\int_0^\infty \begin{bmatrix} y(t) \\ u(t) \end{bmatrix}^\top (\Pi\otimes I_n) \begin{bmatrix} y(t) \\ u(t) \end{bmatrix} dt \geq 0, \, \forall u \in \Ltwo^n,\ y = H(u),
\end{align*}
if and only if $\SG(H) \subset \mathcal{S}(\Pi)$.
\end{lemma}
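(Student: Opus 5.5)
The plan is to reduce the integral inequality, one input at a time, to a pointwise statement about the complex number attached to that input. Fix $u\in\Ltwo^n\setminus\{0\}$ and set $y=Hu$, which lies in $\Ltwo^n$ by stability. Because $\Pi\otimes I_n$ acts blockwise with scalar entries $a,b,c$, the IQC integral is
\begin{align*}
\int_0^\infty \begin{bmatrix} y \\ u \end{bmatrix}^\top (\Pi\otimes I_n)\begin{bmatrix} y \\ u \end{bmatrix}dt = a\norm{y}^2 + 2b\inner{u}{y} + c\norm{u}^2 .
\end{align*}
Dividing by $\norm{u}^2>0$ does not change the sign, and it gives $a\rho^2 + 2b\rho\cos\theta + c$ with $\rho=\rho(u,y)$ and $\theta=\theta(u,y)$ from \eqref{eq:gain_phase}. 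When $y\neq 0$ we have $\rho\cos\theta=\inner{u}{y}/\norm{u}^2$. When $y=0$ the convention $\theta=0$ still gives $\rho\cos\theta=0=\inner{u}{y}/\norm{u}^2$.

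Next, take $z=\rho e^{\pm \mathrm{j}\theta}$. Then $|z|^2=\rho^2$ and $\Re\{z\}=\rho\cos\theta$ for either sign. By the second description in \eqref{eq:S_Pi}, the normalized integral therefore equals $\begin{bmatrix} z\\1\end{bmatrix}^*\Pi\begin{bmatrix} z\\1\end{bmatrix}$. Since the conjugate pair has the same modulus and real part, it lies entirely inside or entirely outside $\mathcal{S}(\Pi)$. So for each fixed $u$, the integral is $\ge 0$ if and only if both points $\rho(u,Hu)e^{\pm\mathrm{j}\theta(u,Hu)}$ lie in $\mathcal{S}(\Pi)$.

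Both directions then follow by quantifying over $u$. If the IQC holds for every $u$, each point of $\SG(H)$ comes from some $u\neq0$ and so lies in $\mathcal{S}(\Pi)$. If $\SG(H)\subset\mathcal{S}(\Pi)$, the IQC holds for every $u\neq 0$, and it holds trivially for $u=0$ because then $y=H0$, which is $0$ for the operators considered (or for linear $H$). The subtle points are the $y=0$ convention and the $u=0$ case; the rest is algebra. Stability is used only to ensure the integrals are finite, so that the splitting into $\norm{y}^2$, $\inner{u}{y}$ and $\norm{u}^2$ is legitimate.
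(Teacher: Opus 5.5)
Your proof is correct. It is the same normalize-by-input-energy argument that the paper sketches at the start of Section~\ref{sec:connection}; the paper gives no separate proof and defers to \cite[Lemma~4]{degroot2025dissipativity}. Your treatment of $u=0$ is adequate: the term there is $a\norm{H(0)}^2$, which is harmless when $a\ge 0$, and under finite-gain stability $\norm{Hu}\le\gamma\norm{u}$ one has $H(0)=0$ anyway.
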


An analogous correspondence holds for hard IQCs, where truncated signals are considered. 

\begin{lemma}[Hard IQC--SG Correspondence {\cite[Sec~6]{degroot2025dissipativity}}]\label{lem:hard_connection}
Let $H : \Ltwoe^n \to \Ltwoe^n$ be a causal system and let $\Pi$ as in~\eqref{eq:Pi_scalar}. Then $H$ satisfies the hard IQC defined by $\Pi$, i.e., for $ y = H(u),$
\begin{align*} 
\int_0^T \begin{bmatrix} y(t) \\ u(t) \end{bmatrix}^\top (\Pi\otimes I_n)  \begin{bmatrix} y(t) \\ u(t) \end{bmatrix} dt \geq 0,  \forall T > 0,\ u \in \Ltwoe^n,\
\end{align*}
if and only if $\SGe(H) \subset \mathcal{S}(\Pi)$.
\end{lemma}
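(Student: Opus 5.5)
The statement is Lemma~\ref{lem:hard_connection}, the hard analogue of Lemma~\ref{lem:soft_connection}. I plan to reproduce the soft argument of \cite{degroot2025dissipativity}, but applied to truncated signals. The key observation is that a hard SG point is built from the pair $(\mathcal{P}_T u, \mathcal{P}_T y)$, and this pair lies in $\Ltwo^n$. The truncated integral over $[0,T]$ coincides with the full integral of this truncated pair, so the whole proof reduces to a pointwise (per-pair) algebraic identity.

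\textbf{Step 1: the identity.} Fix $u\in\Ltwoe^n$, $y=H(u)$ and $T>0$, with $u_T:=\mathcal{P}_T u\neq 0$ and $y_T:=\mathcal{P}_T y$. Put $\rho=\rho_T$, $\theta=\theta_T$ and $z=\rho e^{\pm \mathrm{j}\theta}$. Then $\norm{y_T}^2=\rho^2\norm{u_T}^2$ and $\inner{u_T}{y_T}=\rho\cos\theta\,\norm{u_T}^2=\Re\{z\}\norm{u_T}^2$, the last equality using the convention for $y_T=0$. Consequently
\begin{align*}
\int_0^T \begin{bmatrix} y \\ u\end{bmatrix}^\top(\Pi\otimes I_n)\begin{bmatrix} y\\ u\end{bmatrix}dt
&= a\norm{y_T}^2+2b\inner{u_T}{y_T}+c\norm{u_T}^2\\
&=\norm{u_T}^2\bigl(a|z|^2+2b\Re\{z\}+c\bigr).
\end{align*}
The bracket on the right is exactly the quantity defining $\mathcal{S}(\Pi)$ in \eqref{eq:S_Pi}. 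Both conjugate points $z$ and $\bar z$ give the same value, since the bracket depends only on $|z|$ and $\Re\{z\}$.

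\textbf{Step 2: both directions.} For ($\Rightarrow$), suppose the hard IQC holds. Since $\norm{u_T}^2>0$, the identity gives $a|z|^2+2b\Re z+c\ge 0$ for every point of $\SGe(H)$, so $\SGe(H)\subset\mathcal{S}(\Pi)$. For ($\Leftarrow$), take any $T$ and $u$. If $u_T\neq 0$, the point $z$ built in Step~1 lies in $\SGe(H)\subset\mathcal{S}(\Pi)$, so the right-hand side of the identity is nonnegative. If $u_T=0$, the SG carries no information, and a separate argument is needed.

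\textbf{Main obstacle: the case $u_T=0$.} Here the truncated integral reduces to $a\norm{y_T}^2$. This is nonnegative if $a\ge 0$, but not automatically otherwise. I would first try causality: $\mathcal{P}_T y=\mathcal{P}_T H(\mathcal{P}_T u)=\mathcal{P}_T H(0)$, which vanishes if $H(0)=0$. This condition is implicitly assumed for such systems, for instance LTI systems with $x(0)=0$. If $H(0)\neq 0$ is allowed, I would try a limiting argument: perturb $u$ by a small input supported on $[0,T]$ and use the resulting SG points. I expect this step to require the explicit assumption $H(0)=0$, and I would state that assumption as part of the proof.
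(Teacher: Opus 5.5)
Your Step~1 identity and both directions for $\mathcal{P}_T u\neq 0$ are correct. Normalizing the truncated supply by $\norm{\mathcal{P}_T u}^2$ is the standard argument behind the cited result; the paper gives no proof of its own and only refers to \cite[Sec~6]{degroot2025dissipativity}. The gap is the case $\mathcal{P}_T u=0$ in ($\Leftarrow$). You close it by adding the hypothesis $H(0)=0$, which is not part of the statement, so as written you prove a weaker lemma. Your fallback ``limiting argument'' with a small input supported on $[0,T]$ would also fail. $H$ is not assumed continuous, so $\mathcal{P}_T H(\epsilon v)$ need not approach $\mathcal{P}_T H(0)$, and convergence of SG points is the wrong mechanism anyway.

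The missing idea is that the hypotheses already force what you need. You noted that $a\ge 0$ is trivial. If $a<0$, then $\mathcal{S}(\Pi)=\{z:\lvert z+b/a\rvert^2\le (b^2-ac)/a^2\}$ is bounded, and containment implies $H(0)=0$. To see this, suppose $\mathcal{P}_{T'}H(0)\neq 0$ for some $T'>0$. Fix a unit vector $v\in\R^n$ and set $u_\epsilon:=\epsilon\,\mathbf{1}_{[T',T'+1]}v$. Since $\mathcal{P}_{T'}u_\epsilon=0$, causality gives $\mathcal{P}_{T'}H(u_\epsilon)=\mathcal{P}_{T'}H(0)$. With $T''=T'+1$ this yields
\[
\rho_{T''}(u_\epsilon,Hu_\epsilon)=\frac{\norm{\mathcal{P}_{T''}Hu_\epsilon}}{\norm{\mathcal{P}_{T''}u_\epsilon}}\ge\frac{\norm{\mathcal{P}_{T'}Hu_\epsilon}}{\epsilon}=\frac{\norm{\mathcal{P}_{T'}H(0)}}{\epsilon}\;\to\;\infty\quad(\epsilon\to 0).
\]
Hence $\SGe(H)$ contains points of arbitrarily large modulus, contradicting $\SGe(H)\subset\mathcal{S}(\Pi)$. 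Therefore $\mathcal{P}_T y=\mathcal{P}_T H(0)=0$ whenever $\mathcal{P}_T u=0$, the truncated integral vanishes, and the lemma holds with no extra assumption. Two points are essential. The perturbation must be placed \emph{after} the onset time of $H(0)$, so that causality freezes the output there. And the argument uses blow-up of the truncated gain against the boundedness of the disk, not convergence of SG points.
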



\section{Scaled Graph Containment}
\label{sec:containment}

This section develops the containment machinery for SGs.  We begin with circular regions, where the $J$-spectral factorization aligns the soft and hard certification pathways.  We then extend the framework to conic regions.

\subsection{Soft and Hard SG Region Containment}
\label{ssec:soft-hard}

Leveraging the SG--IQC correspondence established in Section~\ref{sec:connection}, the classical soft--hard IQC equivalence is recast as a geometric statement on SGs.

\begin{theorem}[Equivalence of soft and hard SG containment]
\label{thm:main}
Let $H : \Ltwoe^n \to \Ltwoe^n$ be a causal, $\Ltwo$-stable 
system, and let $\Pi$ be a positive-negative multiplier as 
in~\eqref{eq:Pi_scalar}. Then
\begin{align*}
  \SG(H) \subset \mathcal{S}(\Pi)   
  \;\Longleftrightarrow\;   
  \SGe(H) \subset \mathcal{S}(\Pi).
\end{align*}
\end{theorem}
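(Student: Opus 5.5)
The plan is to chain three equivalences already established in the paper: soft SG containment $\Leftrightarrow$ soft IQC $\Leftrightarrow$ hard IQC $\Leftrightarrow$ hard SG containment. By Lemma~\ref{lem:soft_connection}, since $H$ is $\Ltwo$-stable, $\SG(H)\subset\mathcal{S}(\Pi)$ holds if and only if $H$ satisfies the soft IQC with the lifted multiplier $\Pi\otimes I_n$. By Lemma~\ref{lem:hard_connection}, since $H$ is causal on $\Ltwoe^n$, $\SGe(H)\subset\mathcal{S}(\Pi)$ holds if and only if $H$ satisfies the hard IQC with the same lifted multiplier. The theorem then reduces to showing that the soft and hard IQCs defined by $\Pi\otimes I_n$ are equivalent, which is the content of Lemma~\ref{lem:iqc_equiv}.

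To invoke Lemma~\ref{lem:iqc_equiv}, I first check that $\Pi\otimes I_n\in\mathbb{S}^{2n}$ is positive-negative whenever the scalar $\Pi$ in~\eqref{eq:Pi_scalar} is. With the block partition of Definition~\ref{def:pos-neg}, $(\Pi\otimes I_n)_{11}=aI_n$, $(\Pi\otimes I_n)_{22}=cI_n$, and $(\Pi\otimes I_n)_{12}=(\Pi\otimes I_n)_{21}=bI_n$. The scalar condition gives $a\le-\varepsilon$ and $c\ge\varepsilon$, so $aI_n\preceq-\varepsilon I_n$ and $cI_n\succeq\varepsilon I_n$. Hence the lifted multiplier is positive-negative with the same $\varepsilon$. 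It is also real, constant, and symmetric, as Lemma~\ref{lem:iqc_equiv} requires.

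The main subtlety lies in the domains. Lemma~\ref{lem:iqc_equiv} and Lemma~\ref{lem:soft_connection} treat $H$ as an operator on $\Ltwo^n$, while Lemma~\ref{lem:hard_connection} treats it on $\Ltwoe^n$. I will take the $\Ltwo$ operator to be the restriction of the given causal $\Ltwoe$ operator; this restriction maps $\Ltwo^n$ into $\Ltwo^n$ by $\Ltwo$-stability. I also need the hard IQC produced by Lemma~\ref{lem:iqc_equiv}, which is stated for all $u\in\Ltwoe^n$, to match the one in Lemma~\ref{lem:hard_connection}. If needed, I will bridge this gap with causality. For $u\in\Ltwoe^n$ and $T>0$, the truncated integral depends only on $\mathcal{P}_Tu\in\Ltwo^n$, because $\mathcal{P}_T Hu=\mathcal{P}_T H\mathcal{P}_Tu$. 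So it suffices to verify the truncated inequality for inputs in $\Ltwo^n$, where the pointwise $J$-factorization argument of Lemma~\ref{lem:iqc_equiv} applies directly.

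A minor point is the strict inequality $T>0$ in Lemma~\ref{lem:hard_connection} versus $T\ge0$ in~\eqref{eq:hard_iqc}. The case $T=0$ is trivial, so the two conditions agree. Composing the three equivalences yields $\SG(H)\subset\mathcal{S}(\Pi)\Leftrightarrow\SGe(H)\subset\mathcal{S}(\Pi)$.
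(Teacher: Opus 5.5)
Your proposal is correct and matches the paper's proof: it is the same three-link chain $\SG(H)\subset\mathcal{S}(\Pi) \Leftrightarrow$ soft IQC $\Leftrightarrow$ hard IQC $\Leftrightarrow \SGe(H)\subset\mathcal{S}(\Pi)$, through Lemmas~\ref{lem:soft_connection}, \ref{lem:iqc_equiv}, and~\ref{lem:hard_connection}. Your extra checks are sound details the paper leaves implicit: that $\Pi\otimes I_n$ inherits the positive-negative property with the same $\varepsilon$, that $H$ restricts to $\Ltwo^n$, and that causality reduces $\Ltwoe^n$ inputs to $\mathcal{P}_T u\in\Ltwo^n$.
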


\begin{proof}
The result follows from the chain of equivalences:
\begin{align*}
  \SG(H) \subset \mathcal{S}(\Pi) 
  &\xLeftrightarrow{\;\text{Lem.~\ref{lem:soft_connection}}\;} 
  \text{soft IQC~\eqref{eq:soft_iqc}} \\
  \text{soft IQC~\eqref{eq:soft_iqc}} 
  &\xLeftrightarrow{\;\text{Lem.~\ref{lem:iqc_equiv}}\;} 
  \text{hard IQC~\eqref{eq:hard_iqc}} \\
  \text{hard IQC~\eqref{eq:hard_iqc}}
  &\xLeftrightarrow{\;\text{Lem.~\ref{lem:hard_connection}}\;} 
  \SGe(H) \subset \mathcal{S}(\Pi).  \qedhere
\end{align*}
\end{proof}



\subsection{Implications for SG Computations using LMIs}

We now restrict the attention to LTI systems.

\begin{corollary}\label{cor:lmi}
Let $H$ be an $\Ltwo$-stable LTI with stabilizable and detectable realization $(A,B,C,D)$, and let $\Pi$ be positive-negative.  Let
\begin{align}
  \varrho(\Pi) = \begin{bmatrix} C & D \\ 0 & I \end{bmatrix}^{\!\top}  (\Pi \otimes I_n)  \begin{bmatrix} C & D \\ 0 & I \end{bmatrix},
\end{align}
and consider the KYP-type LMI
\begin{align}\label{eq:soft_lmi}
  \begin{bmatrix} A & B \\ I & 0 \end{bmatrix}^{\!\top}  \begin{bmatrix} 0 & P \\ P & 0 \end{bmatrix}  \begin{bmatrix} A & B \\ I & 0 \end{bmatrix}  - \varrho(\Pi) \preceq 0,
\end{align}
with $P = P^\top$.  Then $\SG(H) \subset \mathcal{S}(\Pi)$ and $\SGe(H) \subset \mathcal{S}(\Pi)$.
\end{corollary}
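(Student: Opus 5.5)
The plan is to show that feasibility of \eqref{eq:soft_lmi} yields the soft IQC defined by $\Pi\otimes I_n$. Lemma~\ref{lem:soft_connection} then gives $\SG(H)\subset\mathcal{S}(\Pi)$, and Theorem~\ref{thm:main} upgrades this to $\SGe(H)\subset\mathcal{S}(\Pi)$. No sign condition on $P$ is needed anywhere, because the storage term drops out on $\Ltwo$.

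First, fix $u\in\Ltwo^n$ and let $x$ be the state trajectory from $x(0)=0$, with $y=Cx+Du$. Multiply \eqref{eq:soft_lmi} on the left by $[x^\top\ u^\top]$ and on the right by its transpose. Since $\dot x = Ax+Bu$, the first term evaluates to $2x^\top P\dot x=\tfrac{d}{dt}\,x^\top P x$. Since $[C\ D;0\ I][x;u]=[y;u]$, the second term is $[y;u]^\top(\Pi\otimes I_n)[y;u]$. This gives the pointwise dissipation inequality
\begin{align*}
\tfrac{d}{dt}\,x(t)^\top P x(t) \le \begin{bmatrix} y(t)\\ u(t)\end{bmatrix}^\top (\Pi\otimes I_n)\begin{bmatrix} y(t)\\ u(t)\end{bmatrix}.
\end{align*}
Integrating over $[0,T]$ with $x(0)=0$ yields $x(T)^\top P x(T)\le \int_0^T [y;u]^\top(\Pi\otimes I_n)[y;u]\,dt$.

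The main obstacle is letting $T\to\infty$ without assuming $P\succeq 0$. Here the $\Ltwo$-stability of $H$ and the stabilizability and detectability of the realization are used. Together they imply that $A$ is Hurwitz: a stabilizable and detectable realization is minimal up to stable uncontrollable or unobservable modes, so every pole of $H$ in the closed right half-plane would otherwise show up in $\norm{H}_\infty$. With $A$ Hurwitz and $u\in\Ltwo^n$, we get $x\in\Ltwo^m$ and $\dot x\in\Ltwo^m$, hence $x(T)\to 0$. Therefore $x(T)^\top P x(T)\to 0$ regardless of the sign of $P$. Also $y\in\Ltwo^n$, so the right-hand integral converges absolutely. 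Taking the limit gives the soft IQC \eqref{eq:soft_iqc} with multiplier $\Pi\otimes I_n$.

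Finally, Lemma~\ref{lem:soft_connection} gives $\SG(H)\subset\mathcal{S}(\Pi)$. The LTI system with zero initial state is causal and extends to $\Ltwoe^n$, and $\Pi$ is positive-negative, so Theorem~\ref{thm:main} gives $\SGe(H)\subset\mathcal{S}(\Pi)$.

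One could try to read the hard IQC directly off the truncated inequality, but that would require $x(T)^\top P x(T)\ge 0$, i.e.\ $P\succeq 0$. Routing the argument through Theorem~\ref{thm:main}, and hence Lemma~\ref{lem:iqc_equiv}, is exactly what removes that requirement.
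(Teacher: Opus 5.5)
Your proof is correct and follows the same two-step route as the paper: feasibility of \eqref{eq:soft_lmi} gives $\SG(H)\subset\mathcal{S}(\Pi)$, and Theorem~\ref{thm:main} then upgrades this to $\SGe(H)\subset\mathcal{S}(\Pi)$. The paper simply cites \cite[Theorem~9]{degroot2025dissipativity} for the first step, whereas you prove it via the dissipation inequality and $x(T)\to 0$ for Hurwitz $A$. One small correction: $A$ Hurwitz follows from $H$ being bounded on $\Ltwo$ (i.e.\ $H\in\mathcal{RH}_\infty$) together with stabilizability and detectability, not from the literal $\sup_\omega$ bound, since that bound does not detect open right-half-plane poles.
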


\begin{proof}
The LMI~\eqref{eq:soft_lmi} yields $\SG(H) \subset \mathcal{S}(\Pi)$ by~\cite[Theorem 9]{degroot2025dissipativity}.  Because $\Pi$ is positive-negative, Theorem~\ref{thm:main} directly implies $\SGe(H) \subset \mathcal{S}(\Pi)$.
\end{proof}

\begin{proposition}[Circular-Region Multipliers]\label{prop:circular}
Let $c \in \mathbb{R}$, $r>0$, and consider the multipliers 
\begin{align*}
  \Pi_{\mathrm{int}}(c,r) :=    \begin{bmatrix} -1 & c \\ c & r^2 - c^2 \end{bmatrix}, \,  \Pi_{\mathrm{ext}}(c,r) :=    \begin{bmatrix} 1 & -c \\ -c & c^2 - r^2 \end{bmatrix}.
\end{align*}
Then $\mathcal{S}(\Pi_{\mathrm{int}}) = {\mathbb{D}}^{\mathrm{int}}(c,r)$ and $\mathcal{S}(\Pi_{\mathrm{ext}}) = \overline{\mathbb{D}}^{\mathrm{ext}}(c,r)$~\cite{degroot2025dissipativity}.  Of the two, only $\Pi_{\mathrm{int}}$ is positive-negative, which holds if and only if $r > |c|$.
\end{proposition}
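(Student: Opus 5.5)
The plan is to verify each claim of Proposition~\ref{prop:circular} directly from Definition~\ref{def:S_Pi} and Definition~\ref{def:pos-neg}, treating the scalar multiplier entries $a,b,c$ in~\eqref{eq:Pi_scalar} concretely.

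\textbf{Region identification.} For $\Pi_{\mathrm{int}}(c,r)$ we have $a=-1$, $b=c$, and constant entry $r^2-c^2$. The defining inequality of $\mathcal{S}(\Pi_{\mathrm{int}})$ is therefore
\begin{align*}
-|z|^2 + 2c\Re\{z\} + r^2 - c^2 \ge 0 .
\end{align*}
Since $|z-c|^2 = |z|^2 - 2c\Re\{z\} + c^2$ for real $c$, this inequality is exactly $|z-c|^2 \le r^2$, i.e.\ $z\in\mathbb{D}^{\mathrm{int}}(c,r)$. For $\Pi_{\mathrm{ext}}$ the quadratic form is the negative of the previous one. The inequality becomes $|z-c|^2 \ge r^2$, which is the closure $\overline{\mathbb{D}}^{\mathrm{ext}}(c,r)$ of the open exterior disc. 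This step is routine completing of the square.

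\textbf{Positive-negativity.} Here $n=1$, so $\Pi_{11}=a$ and $\Pi_{22}$ is the constant entry. For $\Pi_{\mathrm{ext}}$, $\Pi_{11}=1>0$, so no $\varepsilon>0$ can give $\Pi_{11}\le -\varepsilon$. Hence $\Pi_{\mathrm{ext}}$ is never positive-negative, for any $c$ and $r$. For $\Pi_{\mathrm{int}}$, $\Pi_{11}=-1\le-\varepsilon$ holds for any $\varepsilon\in(0,1]$. The only remaining condition is $\Pi_{22}=r^2-c^2\ge\varepsilon$ for some $\varepsilon>0$, which is equivalent to $r^2>c^2$, i.e.\ $r>|c|$ since $r>0$. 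For sufficiency one takes $\varepsilon=\min\{1,r^2-c^2\}$. For necessity, if $r\le|c|$ then $\Pi_{22}\le 0$ and no positive $\varepsilon$ works.

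\textbf{Main obstacle.} The only subtle point is interpretive rather than computational. The statement reads ``only $\Pi_{\mathrm{int}}$ is positive-negative,'' which must be understood as ``$\Pi_{\mathrm{ext}}$ never is, and $\Pi_{\mathrm{int}}$ is exactly when $r>|c|$.'' I would state this precisely in the proof. It is also worth remarking on the geometric meaning of $r>|c|$: the disc $\mathbb{D}^{\mathrm{int}}(c,r)$ contains the origin in its interior, consistent with $\Pi_{22}>0$ meaning $z=0$ (the zero operator) strictly satisfies the constraint.
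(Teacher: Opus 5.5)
Your proposal is correct and follows the paper's proof. The positive-negativity argument is the same: the $(1,1)$ entry $1>0$ rules out $\Pi_{\mathrm{ext}}$, and for $\Pi_{\mathrm{int}}$ the $(1,1)$ entry is $-1$ while $r^2-c^2>0 \iff r>|c|$; your explicit choice $\varepsilon=\min\{1,r^2-c^2\}$ just fills in a detail the paper leaves implicit. The only difference is that you derive the two regions yourself by completing the square, $-|z|^2+2c\operatorname{Re}\{z\}+r^2-c^2 = r^2-|z-c|^2$, whereas the paper cites Lemma~4 of de Groot et al.\ for that identification.
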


\begin{proof}
The regions follow 
from~\cite[Lemma~4]{degroot2025dissipativity}.  
For the positive-negative claim, 
$(\Pi_{\mathrm{int}})_{11} = -1 < 0$ and 
$(\Pi_{\mathrm{int}})_{22} = r^2 - c^2 > 0$ if and only if 
$r > |c|$.  For $\Pi_{\mathrm{ext}}$, 
$(\Pi_{\mathrm{ext}})_{11} = 1 > 0$, violating 
$\Pi_{11} \preceq -\varepsilon I$.
\end{proof}

\begin{remark}[Advantages of the Soft LMI Formulation]\label{remark:computational}
The soft SG calculation via Corollary~\ref{cor:lmi} requires solving~\eqref{eq:soft_lmi} with $P = P^\top$ unconstrained, whereas the direct hard SG calculation~\cite{degroot2025dissipativity} imposes the constraint $P \succeq 0$.  This extra semidefinite cone raises per-iteration cost, increases the effective barrier parameter, and forces smaller step sizes when $P$ is nearly singular~\cite{sturm1999using,nesterov1994interior}.  The soft formulation avoids these effects, yielding speedups of up to 44\% (See Section~\ref{sec:scalability}).
\end{remark}

\subsection{From Circular to Conic Containment Regions}
\label{ssec:obstructions}

The circular regions of Sections~\ref{ssec:soft-hard} cover important cases such as gain bounds, passivity, but cannot capture more general conic exclusion regions (circles, ellipses, paraboloids and hyperboloids)~\cite{IwasakiHara2005,Gupta_2023_elipsoidal}. 

The Hermitian structure of the multiplier $\Pi\in\mathbb{S}^{2}$ forces $(\operatorname{Re}\{z\})^{2}$ and $(\operatorname{Im}\{z\})^{2}$ to share a single coefficient, so every $\mathcal{S}(\Pi)$ with $\det(\Pi)<0$ is a disk, disk complement, or half-plane.  The minimal extension that decouples these coefficients is~$\mathbb{S}^{3}$.  For $z \in \mathbb{C}$, define the augmented vector
\begin{align}\label{eq:augmented-vector}
  v(z) :=   [\operatorname{Re}\{z\}\;\operatorname{Im}\{z\}\;1]^\top \in\mathbb{R}^{3},
\end{align}
and for $\Theta\in\mathbb{S}^{3}$ the \emph{conic region}
\begin{align}\label{eq:conic-region}
  \mathcal{C}(\Theta) := \bigl\{z\in\mathbb{C} \;\big|\; v(z)^{\top}\Theta\,v(z) \leq 0 \bigr\}.
\end{align}
The quadratic form weights $(\operatorname{Re}\{z\})^{2}$ and $(\operatorname{Im}\{z\})^{2}$ independently. This is precisely the setting in~\cite{IwasakiHara2005}, where $3\times 3$ real-symmetric matrices characterize frequency-domain inequalities involving both $H(\textup{j}\omega)$ and its conjugate. For $\mathcal{C}(\Theta)\subset\mathbb{C}$, $\Theta$ must be indefinite, otherwise $\mathcal{C}(\Theta)$ is either empty or all of~$\mathbb{C}$~\cite{IwasakiHara2005}.

Since general conics need not be h-convex, a characterization of which matrices~$\Theta$ yield h-convex regions is required for Theorem \ref{thm:conic-containment}.  Since SGs are symmetric about the real axis, we restrict to $\Theta\in\mathbb{S}^{3}$ with $\Theta_{12}=0$ and $\Theta_{23}=0$, i.e.,
\begin{align}\label{eq:Theta-aligned}
  \Theta  =  \begin{bmatrix} \Theta_{11} & 0 & \Theta_{13}\\     0 & \Theta_{22} & 0\\    \Theta_{13} & 0 & \Theta_{33}  \end{bmatrix}
  \in\mathbb{S}^{3}.
\end{align}

For~\eqref{eq:Theta-aligned}, the region~\eqref{eq:conic-region} restricted to~$\mathbb{C}_{+}$ is
\begin{align}\label{eq:conic-aligned}
  \mathcal{C}(\Theta) \!  = \!\bigl\{\!z \!\in\! \mathbb{C}_{+}\! : \!    \Theta_{11}x^{2}\!+\Theta_{22}y^{2}\!+2\Theta_{13}x\!+\Theta_{33}\!\leq\! 0\!\bigr\},
\end{align}
with $z = x+\textup{j}y$.  $\Theta_{11}$ and~$\Theta_{22}$ govern the weighting of real and imaginary parts. We define $  \alpha := \Theta_{11} - \Theta_{22}.$

\begin{theorem}\label{thm:h-convexity}
Let $\Theta\in\mathbb{S}^{3}$ be of the form~\eqref{eq:Theta-aligned} and assume~$\Theta$ is indefinite. Then
\begin{align*}
  \mathcal{C}(\Theta)\ \textup{is h-convex}   \quad\Longleftrightarrow\quad  \Theta_{11}\geq\Theta_{22}.
\end{align*}
\end{theorem}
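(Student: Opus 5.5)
The plan is to work in the upper half-plane model of hyperbolic space, where every geodesic is either a vertical ray $\{x=x_0,\ y>0\}$ or a Euclidean semicircle $(x-m)^2+y^2=R^2$, $y>0$, centred on the real axis. Write $f(x,y):=\Theta_{11}x^2+\Theta_{22}y^2+2\Theta_{13}x+\Theta_{33}$, so that by \eqref{eq:conic-aligned} $\mathcal{C}(\Theta)=\{x+\textup{j}y\in\mathbb{C}_+ : f(x,y)\le 0\}$. The key observation is that $f$ restricted to a geodesic becomes a one-variable quadratic whose convexity is governed by $\alpha=\Theta_{11}-\Theta_{22}$.

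On a semicircle we substitute $y^2=R^2-(x-m)^2$ and get
\begin{align*}
g(x) := f\bigl(x,\sqrt{R^2-(x-m)^2}\bigr)=\alpha x^2+2(\Theta_{13}+\Theta_{22}m)x+\Theta_{22}(R^2-m^2)+\Theta_{33},
\end{align*}
for $x\in(m-R,m+R)$. Since $x$ parametrizes the semicircle monotonically, the geodesic arc between two points of the region corresponds to an interval $[x_1,x_2]$.

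\emph{Sufficiency ($\alpha\ge0$).} Here $g$ is convex, so $g(x_1)\le0$ and $g(x_2)\le0$ give $g\le0$ on $[x_1,x_2]$. Hence the whole arc lies in $\mathcal{C}(\Theta)$.

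For two points with the same real part, the geodesic is vertical and $f=\Theta_{22}y^2+k(x_0)$. The set $\{y>0:\Theta_{22}y^2+k\le0\}$ is an interval in every sign case of $\Theta_{22}$:
\begin{itemize}
\item if $\Theta_{22}>0$, it is an interval of the form $(0,\bar y]$ or empty;
\item if $\Theta_{22}<0$, it is a ray $[\underline y,\infty)$ or all of $(0,\infty)$;
\item if $\Theta_{22}=0$, it is empty or all of $(0,\infty)$.
\end{itemize}
So vertical segments between points of the region stay in it. This proves h-convexity whenever $\Theta_{11}\ge\Theta_{22}$.

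\emph{Necessity ($\alpha<0$).} We construct two points of $\mathcal{C}(\Theta)$ whose connecting arc leaves the region. Now $g$ is strictly concave with vertex at $x^\ast=-(\Theta_{13}+\Theta_{22}m)/\alpha$. The plan is to choose $m$ and $R$ so that three conditions hold:
\begin{itemize}
\item the vertex lies inside $(m-R,m+R)$;
\item $g(x^\ast)>0$;
\item there are points $x_1<x^\ast<x_2$ in $(m-R,m+R)$ with $g(x_1)\le0$ and $g(x_2)\le0$.
\end{itemize}
The corresponding points of the semicircle then lie in $\mathcal{C}(\Theta)$ while the point above $x^\ast$ does not.

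Because $g$ is concave, $g\to-\infty$ only as $|x|\to\infty$, so the endpoints must be far apart, which requires $R$ large. But the constant term $\Theta_{22}(R^2-m^2)$ also moves with $R$, so the sign of $\Theta_{22}$ matters. The natural way to handle this is to fix the endpoints instead: pick two points $p_1,p_2$ in $\mathcal{C}(\Theta)$ near the real axis (small $y$), with real parts $x_1<x_2$ far apart, and evaluate $f$ at the top of the semicircle through them.

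\emph{Main obstacle.} The main obstacle is guaranteeing that such points exist. This means using indefiniteness of $\Theta$, together with $\alpha<0$, to show that $\mathcal{C}(\Theta)\cap\mathbb{C}_+$ contains points with arbitrarily separated real parts near the real axis, and that the complement contains the apex region. I would split according to the signs of $\Theta_{11}$ and $\Theta_{22}$:
\begin{itemize}
\item If $\Theta_{11}<0$, then $f(x,0^+)\to-\infty$ as $|x|\to\infty$. So points far left and far right on the axis lie in the region, while the top of the semicircle through them satisfies $f\approx\alpha x^2+\Theta_{22}R^2$ expressions that can be made positive.
\item If $\Theta_{11}\ge0$, then $\Theta_{22}>\Theta_{11}\ge0$. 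Here indefiniteness forces a bounded elliptic-type region, and one checks directly that it is non-h-convex by comparing its boundary curvature with that of the osculating geodesic.
\end{itemize}
I expect the latter, curvature-comparison case to need the most care.
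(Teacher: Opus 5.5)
Your sufficiency argument is correct, and it takes a more elementary route than the paper's Beltrami--Klein curvature computation. Restricting $f$ to a geodesic gives a quadratic with leading coefficient $\alpha$, so for $\alpha\ge0$ every geodesic meets $\mathcal{C}(\Theta)$ in an interval. The gap is in necessity: you never produce the violating geodesic, and the route you sketch fails in general.

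The inference ``$g\to-\infty$ only as $|x|\to\infty$, so the endpoints must be far apart'' is a non sequitur. On the bounded interval $(m-R,m+R)$ you only need both real roots of the concave $g$ to lie inside it.

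Worse, the large-$R$ strategy is false when $\Theta_{11}<\Theta_{22}<0$. There the apex value $\Theta_{11}m^2+\Theta_{22}R^2+2\Theta_{13}m+\Theta_{33}\to-\infty$ as $R\to\infty$, uniformly in $m$, contrary to your claim that it ``can be made positive''. For instance, $\Theta=\mathrm{diag}(-2,-1,1)$ gives $\mathcal{C}(\Theta)=\{2x^2+y^2\ge1\}\cap\mathbb{C}_+$. On semicircles centered at $0$ one has $g(x)=1-R^2-x^2$, so every semicircle with $R\ge1$ lies entirely in the region, and violating arcs exist only for $R\in(1/\sqrt{2},1)$.

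Your second case is also misdescribed. $\Theta_{11}=0<\Theta_{22}$ gives unbounded regions such as $\{x\le -y^2\}$ or the strip $\{0<y\le1\}$, not a bounded ellipse. Moreover, the ``osculating geodesic'' comparison you defer is precisely the substance of the proof.

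The missing idea is to localize at a boundary point. The paper does this by showing the curvature numerator equals $8\Theta_{22}^2\alpha$ along the whole boundary arc in Beltrami--Klein coordinates. In your coordinates it takes a few lines:
\begin{enumerate}
\item Take $\Theta_{22}\neq0$. Indefiniteness, together with evenness of $f$ in $y$ and homogeneity for vectors with zero last entry, shows that $f$ takes both signs on $\mathbb{C}_+$. Hence $f$ vanishes at some $p_0=(x_0,y_0)$ with $y_0>0$.
\item At $p_0$, $f_y=2\Theta_{22}y_0\neq0$. So the level curve has a non-vertical tangent there, and there is a semicircle tangent to it with center $m=x_0-y_0f_x/f_y$.
\item Along this semicircle $g(x_0)=g'(x_0)=0$, so $g(x)=\alpha(x-x_0)^2$.
\item Only the constant term of $g$ depends on $R$. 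Replacing $R^2$ by $R^2+\delta$ with $\Theta_{22}\delta=\varepsilon>0$ gives $g_\delta(x)=\alpha(x-x_0)^2+\varepsilon$. This is positive at $x_0$ and negative for $|x-x_0|>\sqrt{\varepsilon/|\alpha|}$.
\item Since $x_0$ is interior to $(m-R,m+R)$, for small $\varepsilon$ this yields two points of $\mathcal{C}(\Theta)$ whose connecting geodesic arc leaves $\mathcal{C}(\Theta)$.
\end{enumerate}
This handles all sign patterns of $\Theta_{11},\Theta_{22}$ at once. The remaining case $\Theta_{22}=0$, where $\Theta_{11}=\alpha<0$, is the disconnected set $\{x:\Theta_{11}x^2+2\Theta_{13}x+\Theta_{33}\le0\}\times(0,\infty)$.
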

Prior to proving Theorem~\ref{thm:h-convexity}, we introduce a Beltrami--Klein coordinate representation of conic regions. Specifically, via the BK mapping $f_{\mathrm{BK}}:\mathbb{C}_{+}\to\mathbb{D}$, from coordinates $z=(x,y)\in\mathbb{C}_{+}$ to coordinates $w=(\eta,\phi) \in \mathbb{D}$ given by $\eta = \tfrac{x^{2}+y^{2}-1}{1+x^{2}+y^{2}},\,  \phi  = \tfrac{-2x}{1+x^{2}+y^{2}},$ and inverse (for $y>0$)
\begin{align}\label{eq:BK-inv}
  x = \tfrac{-\phi }{1-\eta}\,,\quad
  y = \tfrac{\sqrt{1-\eta^{2}-\phi ^{2}}}{1-\eta}\,,
\end{align}
where $1-\eta>0$ and $\sqrt{1-\eta^{2}-\phi ^{2}}>0$ on~$\mathbb{D}$. substituting~\eqref{eq:BK-inv} into~\eqref{eq:conic-aligned} and factorizing the positive factor $(1-\eta)^{2}$ yields the BK conic condition $q(w)\leq 0$ with
\begin{align}\label{eq:conic_BK}
       q(w):= w^{\!\top}\!M\,w + 2\,b^{\!\top}w + c,
\end{align}
where
\begin{align}\label{eq:Mbc-def}
\begin{split}
   M &= \begin{pmatrix} 
    \Theta_{33}\!-\!\Theta_{22} & \Theta_{13}\\
    \Theta_{13} & \Theta_{11}\!-\!\Theta_{22}
  \end{pmatrix},\\
  b& = \begin{pmatrix}-\Theta_{33}\\-\Theta_{13}\end{pmatrix}\!,\, c = \Theta_{22}+\Theta_{33}.
\end{split}
\end{align}

The conic in BK coordinates is $\mathcal{C}_{\mathrm{BK}}(\Theta) := \{w\in\mathbb{D}: q(w)\leq 0\}$, and h-convexity of $\mathcal{C}(\Theta)$ is equivalent to Euclidean convexity of~$\mathcal{C}_{\mathrm{BK}}(\Theta)$.

\begin{proof}[Proof of Theorem~\ref{thm:h-convexity}]
Write $\alpha:=\Theta_{11}-\Theta_{22}$.

\textbf{Degenerate case ($\Theta_{22}=0$).}
The conic~\eqref{eq:conic-aligned} reduces to $\Theta_{11}x^{2}+2\Theta_{13}x+\Theta_{33}\leq 0$, a condition on~$x$ alone.  If $\alpha\geq 0$, the set is a vertical strip or half-plane in~$\mathbb{C}_{+}$, which is h-convex.  If $\alpha<0$, the set is the exterior of a bounded interval, hence disconnected and not h-convex.

\textbf{Generic case ($\Theta_{22}\neq 0$).}
Since $\mathbb{D}$ is convex, the set $\mathcal{C}_{\mathrm{BK}}(\Theta)=\{q\leq 0\}\cap\mathbb{D}$ is convex if and only if its boundary arc $\Gamma:=\{w\in\mathbb{D}:q(w)=0\}$ has non-negative curvature. The signed curvature of~$\Gamma$ is $\kappa = N/\lvert\nabla q\rvert^{3}$, where
\begin{align}\label{eq:kappa-def}
  N := q_{\eta\eta}\,q_{\phi}^{2}-2\,q_{\eta\phi}\,q_{\eta}\,q_{\phi}+q_{\phi\phi}\,q_{\eta}^{2}
\end{align}
is the curvature numerator~\cite{GOLDMAN2005632} and subscripts denote partial derivatives.  Indefiniteness of~$\Theta$ ensures that~$\Gamma$ is a 
non-degenerate conic arc and therefore $\lvert\nabla q\rvert>0$ on~$\Gamma$, meaning $\operatorname{sign}(\kappa)=\operatorname{sign}(N)$ on~$\Gamma$ \cite{do2016differential}.

We now show that $N$ is constant on~$\Gamma$ and determined entirely by~$\alpha$.  Set $g := \tfrac{1}{2}\nabla q= Mw + b $.  The constant second derivatives $q_{\eta\eta}=2m_{11}$, $q_{\phi \phi }=2m_{22}$, $q_{\eta \phi }=2m_{12}$ together with $(q_\eta ,q_\phi ) = 2g^{\!\top}$ reduce~\eqref{eq:kappa-def} to
\begin{align}\label{eq:N-8gAg}
  N = 8\,g^{\!\top}\operatorname{adj}(M)\,g,
\end{align}
where $\operatorname{adj}(M) = \bigl(\begin{smallmatrix}m_{22}&-m_{12}\\-m_{12}&m_{11}\end{smallmatrix}\bigr)$.  Expanding $g = Mw+b$ and using $M^{\!\top}\operatorname{adj}(M)=\det(M)\,I_{2}$ yields
\begin{align}\label{eq:gAg-identity}
  g^{\!\top}\!\operatorname{adj}(M)g\!  =\! \det(M)\!\bigl[w^{\!\top}\!M\!w\! +\! 2b^{\!\top}\!w\bigr]+ b^{\!\top}\!\operatorname{adj}(M)b.
\end{align}
 Evaluating along the zero set, i.e., on $\Gamma$ the bracketed term in \eqref{eq:gAg-identity} equals $-c$, so~\eqref{eq:gAg-identity} becomes $b^{\!\top}\operatorname{adj}(M)\,b - c\,\det M$.  Substituting the entries from~\eqref{eq:Mbc-def}, every monomial in~$\Theta_{13}$ or~$\Theta_{33}$ cancels pairwise, leaving
\begin{align}\label{eq:key-cancel}
  b^{\!\top}\operatorname{adj}(M)\,b - c\,\det M
  = \Theta_{22}^{2}\,\alpha.
\end{align}
Combining~\eqref{eq:N-8gAg}--\eqref{eq:key-cancel} gives $N = 8\,\Theta_{22}^{2}\,\alpha$ on~$\Gamma$, independently of~$\Theta_{13}$, $\Theta_{33}$, or the position~$w$.

If $\alpha\geq 0$, then $N\geq 0$ and $\mathcal{C}_{\mathrm{BK}}(\Theta)$ is convex.  If $\alpha<0$, then $N<0$, the boundary is locally strictly concave as seen from~$\{q\leq 0\}$, and a chord between two nearby boundary points on opposite sides of any $w_{0}\in\Gamma\cap\operatorname{int}(\mathbb{D})$ exits~$\mathcal{C}_{\mathrm{BK}}(\Theta)$, violating convexity.
\end{proof}

\begin{remark}
The condition $\Theta_{11} \geq \Theta_{22}$ has a geometric interpretation in which the conic is at least as extended in the imaginary direction as in the real direction, i.e., tall conics. When $\Theta_{11} = \Theta_{22}$, the region reduces to a disk or half-plane.
\end{remark}

\subsection{Conic Containment via Frequency-Domain Certification} \label{ssec:conic}

We now establish a sufficient condition for SG containment in a conic region~$\mathcal{C}(\Theta)$.  For $H(s)\in\mathcal{RH}_\infty^{n\times n}$, we first define the \emph{Hermitian part} $H_s(\omega) := \tfrac{1}{2}(H(\textup{j}\omega)+H(\textup{j}\omega)^{*})$.

\begin{theorem}\label{thm:conic-containment}
Let $H(s)\in\mathcal{RH}_\infty^{n\times n}$, and let $\Theta\in\mathbb{S}^{3}$ be of the form~\eqref{eq:Theta-aligned}, indefinite, with $\Theta_{11}\geq\Theta_{22}$.  If $\forall  \omega\in \mathbb{R}$
\begin{align}\label{eq:Q-NSD}
  \begin{split}
      Q(\omega) := \alpha\, H_s(\omega)^{2}+ \Theta_{22}\, H(\textup{j}\omega)^{*}H(\textup{j}\omega) \\ +2\Theta_{13}\, H_s(\omega)+ \Theta_{33}\, I_n \preceq 0,
  \end{split}
\end{align}
then $\mathrm{SG}(H)\subseteq\mathcal{C}(\Theta)$.
\end{theorem}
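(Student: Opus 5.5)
The plan is to work directly with the definition of $\SG(H)$ and pass to the frequency domain with Parseval's theorem. The quadratic form defining $\mathcal{C}(\Theta)$ then becomes an integral of $\hat u^* Q(\omega)\hat u$, up to one nonlinear term. That term is controlled by Cauchy--Schwarz, and this is exactly where $\alpha=\Theta_{11}-\Theta_{22}\ge 0$ enters.

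\textbf{Step 1 (SG point in conic coordinates).} Fix $u\in\Ltwo^n\setminus\{0\}$ with $y=Hu$, and normalize $\norm{u}=1$, which leaves $\rho$ and $\theta$ unchanged. The SG points are $z=\rho e^{\pm\mathrm{j}\theta}$, with
\begin{align*}
x:=\operatorname{Re}\{z\}=\inner{u}{y}, \qquad |z|^2=\norm{y}^2 .
\end{align*}
Hence $\operatorname{Im}\{z\}^2=\norm{y}^2-\inner{u}{y}^2$ for both conjugate points. The case $y=0$ gives $z=0$ and is covered by the same formulas. Substituting into~\eqref{eq:conic-region} with $\Theta$ as in~\eqref{eq:Theta-aligned}, we must show
\begin{align*}
\alpha\,\inner{u}{y}^2+\Theta_{22}\norm{y}^2+2\Theta_{13}\inner{u}{y}+\Theta_{33}\le 0 .
\end{align*}

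\textbf{Step 2 (Parseval).} Let $\hat u$ be the Fourier transform of $u$, normalized so that $\int\hat u^*\hat u\,d\omega=1$. Since $u$ and $y$ are real, $\hat u(-\omega)=\overline{\hat u(\omega)}$, and $\inner{u}{y}$ is real. This gives
\begin{align*}
\inner{u}{y}&=\int \hat u^* H_s(\omega)\hat u\,d\omega, &
\norm{y}^2&=\int \hat u^* H(\textup{j}\omega)^*H(\textup{j}\omega)\hat u\,d\omega, &
1&=\int\hat u^*\hat u\,d\omega .
\end{align*}
The Hermitian part appears because only the real part of $\int \hat u^*H\hat u$ survives. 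With these identities, the terms carrying $\Theta_{22}$, $\Theta_{13}$ and $\Theta_{33}$ are exactly $\int \hat u^*(\cdot)\hat u$ of the corresponding terms of $Q(\omega)$.

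\textbf{Step 3 (the nonlinear term, the main obstacle).} The term $\alpha\inner{u}{y}^2$ is a square of an integral, not an integral of a quadratic form. Apply Cauchy--Schwarz in $\Ltwo$ to the pair $\hat u$ and $H_s\hat u$:
\begin{align*}
\Bigl(\int \hat u^*H_s\hat u\Bigr)^2\le \int\hat u^*\hat u\cdot\int \hat u^*H_s^2\hat u=\int \hat u^*H_s(\omega)^2\hat u\,d\omega .
\end{align*}
This uses that $H_s$ is Hermitian, so $\norm{H_s\hat u}^2=\hat u^*H_s^2\hat u$. Because $\alpha\ge 0$ by hypothesis, multiplying by $\alpha$ preserves the inequality. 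This is the only place where the h-convexity condition of Theorem~\ref{thm:h-convexity} is used. For $\alpha<0$ the bound would point the wrong way, which is why I expect this step to be the crux.

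\textbf{Step 4 (conclude).} Summing the bounds, the left-hand side in Step~1 is at most $\int\hat u^*Q(\omega)\hat u\,d\omega$, which is $\le 0$ by~\eqref{eq:Q-NSD}. Hence every point of $\SG(H)$ lies in $\mathcal{C}(\Theta)$. If~\eqref{eq:conic-aligned} is read as restricted to $\mathbb{C}_+$, the symmetry of $\SG(H)$ about the real axis covers the conjugate points.

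Indefiniteness of $\Theta$ is not needed in this argument. It only guarantees that $\mathcal{C}(\Theta)$ is a proper nontrivial region.
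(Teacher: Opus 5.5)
Your proof is correct, and it takes a genuinely different route from the paper's.

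The paper applies Cauchy--Schwarz only pointwise in frequency, to a vector $u\in\mathbb{C}^n$, which gives the frequency-wise containment $\SG(H(\textup{j}\omega))\subseteq\mathcal{C}(\Theta)$. It then lifts this to the full soft SG using the characterization $\SG(H)=\mathrm{co}_h\bigl(\bigcup_{\omega}\SG(H(\textup{j}\omega))\bigr)$ from \cite{Chaffey_2023}, together with Theorem~\ref{thm:h-convexity} and the Beltrami--Klein map. In that argument, h-convexity of $\mathcal{C}(\Theta)$, and with it indefiniteness, is an essential ingredient.

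You instead fix an arbitrary $u\in\Ltwo^n$ and use Parseval to write the three affine terms exactly as integrals $\int\hat u^*(\cdot)\,\hat u\,d\omega$. You then apply Cauchy--Schwarz once in $L^2$ over frequency. That single inequality combines two things. One is the paper's pointwise bound $(\hat u^*H_s\hat u)^2\le(\hat u^*\hat u)(\hat u^*H_s^2\hat u)$. The other is averaging across frequencies, via joint convexity of $(x,t)\mapsto x^2/t$, which is the analytic counterpart of the paper's h-convex-hull step.

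What your route buys:
\begin{enumerate}
\item It is shorter and self-contained, and uses $\alpha\ge 0$ purely as a sign condition.
\item It needs neither the external hull theorem nor the curvature argument behind Theorem~\ref{thm:h-convexity}.
\item It correctly shows that indefiniteness of $\Theta$ is not needed for this statement.
\end{enumerate}

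What the paper's route buys:
\begin{enumerate}
\item It isolates the frequency-wise containment, which matches how soft SGs are computed by frequency sampling.
\item Its lifting step applies to any h-convex region known to contain all frequency-wise SGs, however that containment is certified.
\end{enumerate}
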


\begin{proof}

\textit{Step~1 (SG points characterization).}
Fix $\omega\in\mathbb{R}$ and let $z\in\mathrm{SG}(H(\textup{j}\omega))$ be produced by input direction~$u\in\mathbb{C}^{n}$, $u\neq 0$.  By definition, $z = \rho\,e^{\pm\textup{j}\theta}$ where $\rho$ and $\theta$ are defined in \eqref{eqn:gain_phase}. From the polar form, $|z|^{2} = \rho^{2}$ and $r:=\operatorname{Re}\{z\} = \rho\cos\theta$.  Substituting the definitions of~$\rho$ and~$\theta$ into $\operatorname{Re}\{z\} = \rho\cos\theta$ yields $  \operatorname{Re}\{z\}= \tfrac{\|H(\textup{j}\omega)\,u\|}{\|u\|} \tfrac{\operatorname{Re}\{u^{*}H(\textup{j}\omega)\,u\}}{\|H(\textup{j}\omega)\,u\|\;\|u\|} = \tfrac{\operatorname{Re}\{u^{*}H(\textup{j}\omega)\,u\}}{\|u\|^{2}}.$
Since $\operatorname{Re}\{u^{*}A\,u\} = u^{*}\tfrac{1}{2}(A+A^{*})\,u$ for any~$A$, the two key quantities are
\begin{align}\label{eq:r-rho-RQ}
  r = \tfrac{u^{*}H_s(\omega)\,u}{\|u\|^{2}},\;  \rho^{2} = \tfrac{u^{*}H(\textup{j}\omega)^{*}H(\textup{j}\omega)\,u}{\|u\|^{2}}.
\end{align}

\textit{Step~2 (Conic membership).}
The conic membership test~\eqref{eq:conic-aligned} requires $v(z)^{\!\top}\Theta\,v(z)\leq 0$.  Writing $z = r + \textup{j}\,\operatorname{Im}\{z\}$ and using $\operatorname{Im}\{z\}^{2} = |z|^{2} - \operatorname{Re}\{z\}^{2} = \rho^{2} - r^{2}$, this becomes
\begin{align}\label{eq:conic-rho-r}
  (\Theta_{11} - \Theta_{22})\, r^{2} + \Theta_{22}\,\rho^{2} + 2\Theta_{13}\, r + \Theta_{33} \leq 0.
\end{align}

$r^2$ can be bounded above via the Cauchy--Schwarz inequality as follows $ (u^{*}H_s\,u)^{2} = |\langle H_s\,u, u\rangle|^{2} \leq \|H_s\,u\|^{2}\,\|u\|^{2} =(u^{*}H_s^{2}\,u)\,\|u\|^{2}$, where the last equality uses $H_s^{*} = H_s$.  Dividing by $\|u\|^{4}$ and multiplying by~$\alpha$,  and since $\Theta_{11} - \Theta_{22}=\alpha\geq 0$, we arrive to
\begin{align}\label{eq:CS-bound}
  \alpha\,r^{2}= \alpha\,\tfrac{(u^{*}H_s\,u)^{2}}{\|u\|^{4}}\leq \alpha\,\tfrac{u^{*}H_s^{2}\,u}{\|u\|^{2}}.
\end{align}

Replacing $\alpha\,r^{2}$ in~\eqref{eq:conic-rho-r} by the upper bound~\eqref{eq:CS-bound} and substituting~\eqref{eq:r-rho-RQ} for the remaining terms yields
\begin{align*}
  \alpha\, r^{2} + \Theta_{22}\,\rho^{2} + 2\Theta_{13}\, r + \Theta_{33}&\leq  \tfrac{u^{*}Q(\omega)\,u}{\|u\|^{2}},
\end{align*}
with $Q(\omega)$ as in \eqref{eq:Q-NSD}. Hence, since $Q(\omega)\preceq 0$, then~$\forall u\neq0$, ~\eqref{eq:conic-rho-r} holds for every $z\in\mathrm{SG}(H(\textup{j}\omega))$.  Since~$\omega$ is arbitrary, $\mathrm{SG}(H(\textup{j}\omega))\subseteq\mathcal{C}(\Theta)$ for all~$\omega\in\mathbb{R}$.

\textit{Step~3 (From frequency-wise to full SG containment).}
For an LTI system in~$\mathcal{RH}_\infty^{n \times n}$, the SG equals the h-convex hull of the frequency-wise SGs~\cite[Thm.~4]{Chaffey_2023}, i.e.,  $\mathrm{SG}(H) = \mathrm{co}_{h}\left(\cup_{\omega\in\mathbb{R}} \mathrm{SG}(H(\textup{j}\omega))\right)$. Since~$\Theta$ is indefinite with $\Theta_{11}\geq\Theta_{22}$, Theorem~\ref{thm:h-convexity} guarantees that $\mathcal{C}(\Theta)$ is h-convex.  Applying the Beltrami--Klein mapping~$f_{\mathrm{BK}}$ (See Appendix), h-convexity of~$\mathcal{C}(\Theta)$ in $\mathbb{C}$ becomes Euclidean convexity of~$f_{\mathrm{BK}}(\mathcal{C}(\Theta))$ in~$\mathbb{D}$.  By Step 2, $f_{\mathrm{BK}}(\mathrm{SG}(H(\textup{j}\omega))) \subset f_{\mathrm{BK}}(\mathcal{C}(\Theta))$ for every~$\omega$. Pick any two points $p \in f_{\mathrm{BK}}(\mathrm{SG}(H(\textup{j}\omega_1)))$ and $q \in f_{\mathrm{BK}}(\mathrm{SG}(H(\textup{j}\omega_2)))$ for arbitrary $\omega_1, \omega_2 \in \mathbb{R}$.  Since both $p$ and $q$ lie in the convex set $f_{\mathrm{BK}}(\mathcal{C}(\Theta))$, every convex combination $\lambda\, p + (1-\lambda)\,q$, $\lambda \in [0,1]$, also lies in $f_{\mathrm{BK}}(\mathcal{C}(\Theta))$.  As $p$, $q$, $\omega_1$, and $\omega_2$ are arbitrary, this shows that $\mathrm{co}_h\bigl(\bigcup_{\omega} f_{\mathrm{BK}}(\mathrm{SG}(H(\textup{j}\omega)))\bigr) \subseteq f_{\mathrm{BK}}(\mathcal{C}(\Theta))$.  Applying~$f_{\mathrm{BK}}^{-1}$, yields
$$ \mathrm{SG}(H) = \mathrm{co}_{h}\left(\bigcup_{\omega\in\mathbb{R}} \mathrm{SG}(H(\textup{j}\omega))\right) \subseteq \mathcal{C}(\Theta). $$
\end{proof}

\section{SRG-based Stability Analysis}

We now develop stability conditions that exploit the SG containment established in Section~\ref{sec:containment}. Let $H$ be a causal operator. The \emph{inverse SG} of $H$, denoted by $\SG^{-1}(H)$ is
\begin{align}
\SG^{-1}(H) \;:=\; \left\{ w \;\middle|\; w = z^{-1},\; z \in \SG(H)\setminus\{0\} \right\},
\label{eq:inverse_srg}
\end{align}
where the point $z=0$ is mapped to $w=\infty$. This construction represents the SG associated with the inverse input--output relation.
Moreover, let $\mathcal A, \mathcal B \subset \mathbb{C}$ be nonempty sets. The Euclidean distance between $\mathcal A$ and $\mathcal B$ is defined as
\begin{align}
\dist(\mathcal A,\mathcal B) \;:=\; \inf_{a \in \mathcal A,\, b \in \mathcal B} |a-b|. \label{eq:set_distance}
\end{align}
The sets $\mathcal A$ and $\mathcal B$ are said to be \emph{strictly separated} if $\dist(\mathcal A,\mathcal B) > 0$. Note that $\dist(\mathcal A,\mathcal B)=0$ if and only if the closures of $\mathcal A$ and $\mathcal B$ intersect or if $\infty\in\mathcal{A}\in\mathbb{C}\cup\{\infty\}$ and $\infty\in\mathcal{B}\in\mathbb{C}\cup\{\infty\}$.

\subsection{Hard and Soft Stability Theorems}

Feedback stability analysis within the SG framework relies on strict geometric separation between the SGs associated with the systems in feedback. We now recall the corresponding soft and hard SG stability theorems.

\begin{figure}[hbt] \centering \begin{tikzpicture}[scale=1, every node/.style={transform shape}] 
\draw (9.75,4.3) rectangle (11.25,3.7); \node at (10.5,4) {$H_1$}; 
\draw (9.75,3.6) rectangle (11.25,3); 
\node at (10.5,3.3) {$H_2$}; 
\draw[-latex, line width = .5 pt] (12.5,4) -- (13.5,4); 
\draw[-latex, line width = .5 pt] (11.25,4) -- (12.5,4) -- (12.5,3.3) -- (11.25,3.3);
\draw[fill = white] (8.5,4) circle [radius=0.05]; 
\draw[-latex, line width = .5 pt] (9.75,3.3) -- (8.5,3.3) -- (8.5,3.95); 
\draw[-latex, line width = .5 pt] (8.55,4) -- (9.75,4); 
\draw[-latex, line width = .5 pt] (7.5,4) -- (8.45,4); 
\node at (8.6892,3.8444) {\tiny$-$}; 
\node at (8.3332,4.1664) {\tiny$+$}; 
\node at (9,4.1664) {$e$}; 
\node at (8,4.1664) {$u$}; 
\node at (13,4.1664) {$y$}; \end{tikzpicture}
\caption{Negative feedback interconnection of $H_1$ and $H_2$.} 
\label{fig:feedback} 
\end{figure}
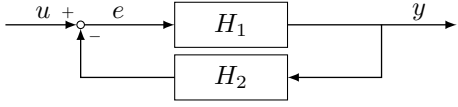

\begin{theorem}[Soft SG separation~\cite{Chaffey_2023, chen2025softhardSRG}]
\label{thm:soft_stability}
Let $H_1, H_2 : \mathcal{L}_2 \to \mathcal{L}_2$ be causal, $\mathcal{L}_2$-stable systems, and assume that their feedback interconnection as in Fig.~\ref{fig:feedback} is well-posed. If
\begin{align}
\dist\left(\SG^{-1}(H_1),\, -\tau\,\SG(H_2)\right) > 0, \qquad \forall \tau \in (0,1], \label{eq:soft_stability}
\end{align}
then the feedback interconnection is $\mathcal{L}_2$-stable.
\end{theorem}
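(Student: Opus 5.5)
The plan is a homotopy argument on the loop gain parameter $\tau$, following Chaffey et al. The scaling $\tau$ is used to move from a trivially stable loop to the actual one while keeping a uniform incremental-gain bound on the closed-loop map; soft SG separation supplies that bound.

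\textbf{Uniform bound.} Consider the interconnection of $H_1$ with $\tau H_2$, $\tau\in[0,1]$. For $\tau=0$ the loop is just $H_1$, which is $\Ltwo$-stable by assumption.

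Fix $\tau$ and suppose $e\in\Ltwo$ solves $e = u - \tau H_2 H_1 e$. Write $y_1 = H_1 e$ and $y_2=\tau H_2 y_1$. Then
\[
u = e + y_2 .
\]
The pair $(e,y_1)$ gives a point of $\SG(H_1)$, hence the pair $(y_1,e)$ gives a point of $\SG^{-1}(H_1)$. The pair $(y_1,-y_2)$ gives a point of $-\tau\SG(H_2)$. The gain--phase geometry of SGs (Chaffey's lemma relating $\|e + y_2\|/\|y_1\|$ to the distance between the corresponding SG points) then yields
\[
\|u\| \;=\; \|e+y_2\| \;\ge\; \dist\bigl(\SG^{-1}(H_1),-\tau\SG(H_2)\bigr)\,\|y_1\|.
\]
This is the key inequality. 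It is where the Euclidean separation in \eqref{eq:soft_stability} is used, and it needs the SGs to be compared in a common frame. For the single-channel case this requires the "chord property" or h-convexity of one of the two SGs, which I would assume as in the source theorem.

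The point $\infty$ must be handled: it occurs when $y_1=0$, and then the bound is trivial. Finally, I would argue that $\tau\mapsto \dist(\SG^{-1}(H_1),-\tau\SG(H_2))$ is lower semicontinuous and positive on the compact interval $(0,1]$, extended by continuity at $0$ using the boundedness of $\SG(H_2)$. This gives a uniform $\delta>0$, and hence a uniform gain bound
\[
\|y_1\|\le \|u\|/\delta .
\]

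\textbf{Homotopy step.} Given stability at $\tau_0$ with closed-loop gain at most $\gamma := \|H_1\|\,(1+\|u\|\text{-bound})$, which is uniform, I would write the loop at $\tau_0+\nu$ as a perturbation of the loop at $\tau_0$ by $\nu H_2$. A small-gain / contraction argument then shows stability for $|\nu| < 1/(\gamma\|H_2\|)$, and well-posedness provides causality on $\Ltwoe$. Because the step size is independent of $\tau_0$, finitely many steps reach $\tau=1$.

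\textbf{Main obstacle.} The delicate point is the first step: extracting the uniform lower bound $\delta$ on the distance across all $\tau\in(0,1]$. This matters most near $\tau\to0$ and when the SGs are unbounded or contain $\infty$. There I would use boundedness of $\SG(H_2)$ ($H_2$ is $\Ltwo$-stable) to get $\dist\ge \dist(\SG^{-1}(H_1),0)-\tau\|H_2\|$. The quantity $\dist(\SG^{-1}(H_1),0)$ is positive because $\SG(H_1)$ is bounded. This covers small $\tau$, and compactness covers the rest.
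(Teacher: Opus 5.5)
Your architecture is the one behind the results the paper cites: an a priori bound from the separation condition, made uniform in $\tau$, followed by a homotopy from the trivially stable loop at $\tau=0$. The paper itself only recalls this theorem and gives no proof. Two steps, however, do not prove the statement as given.

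\textbf{The chord property is not needed.} You take the chord property (or h-convexity) as an extra hypothesis to get the key inequality. The statement contains no such assumption, and the paper remarks right after it that none is needed, so as written you prove a weaker theorem. Your inequality holds unconditionally, because both SG points are measured against the same reference signal $y_1$. Take the upper-half-plane representatives $z_1:=\tfrac{\norm{e}}{\norm{y_1}}e^{\mathrm{j}\angle(y_1,e)}\in\SG^{-1}(H_1)$ and $z_2:=\tfrac{\norm{y_2}}{\norm{y_1}}e^{\mathrm{j}\angle(y_1,-y_2)}\in-\tau\SG(H_2)$. The triangle inequality for the angular metric on the unit sphere of $\Ltwo$ gives $\angle(e,-y_2)\ge|\angle(y_1,e)-\angle(y_1,-y_2)|$, and $\cos$ is decreasing on $[0,\pi]$, hence
\[
\norm{e+y_2}^2=\norm{e}^2+\norm{y_2}^2-2\norm{e}\norm{y_2}\cos\angle(e,-y_2)\ge\norm{y_1}^2\,|z_1-z_2|^2 .
\]
The chord property is only needed for the sum rule that \emph{locates} the SG of $H_1^{-1}+\tau H_2$, not for bounding its modulus. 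A smaller point: an infimum of continuous functions is in general only upper semicontinuous, so lower semicontinuity of $d(\tau):=\dist(\SG^{-1}(H_1),-\tau\SG(H_2))$ needs a reason. The reason is that $\tau\mapsto|a+\tau b|$ is $\norm{H_2}$-Lipschitz uniformly over $b\in\SG(H_2)$. So $d$ is Lipschitz on $[0,1]$ with $d(0)\ge 1/\norm{H_1}$, which gives $\delta:=\min_{[0,1]}d>0$.

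\textbf{The homotopy step cannot use contraction.} $H_1$ and $H_2$ are only assumed to have finite, non-incremental gain, so there is no Lipschitz estimate and no fixed-point theorem that produces a solution of the loop at $\tau_0+\nu$. Note that $\norm{y_1}\le\norm{u}/\delta$ is only an a priori estimate for solutions already known to lie in $\Ltwo$; producing such solutions is exactly what the homotopy must do. The mechanism that works is the Megretski--Rantzer one \cite{Megretski1997}:
\begin{enumerate}
\item Read the well-posedness hypothesis as holding for the loops $(H_1,\tau H_2)$ for every $\tau\in[0,1]$. Then the loop at $\tau_0+\nu$ has a solution in $\Ltwoe$.
\item That solution also solves the loop at $\tau_0$ driven by $\tilde u:=u-\nu H_2 y_1$.
\item Use uniqueness and causality of the $\tau_0$ closed loop, together with $\norm{\mathcal{P}_T H_2 v}\le\norm{H_2}\norm{\mathcal{P}_T v}$, which follows from causality. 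This gives
\[
\norm{\mathcal{P}_T y_1}\le\delta^{-1}\bigl(\norm{u}+|\nu|\,\norm{H_2}\,\norm{\mathcal{P}_T y_1}\bigr)\qquad\forall\, T\ge 0 .
\]
\item For $|\nu|\le\delta/(2\norm{H_2})$ the truncations are uniformly bounded, so $y_1\in\Ltwo$, and the a priori bound then applies at $\tau_0+\nu$.
\end{enumerate}
This step size is independent of $\tau_0$. With these two repairs, your finite-step conclusion goes through.
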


The commonly imposed chord property~\cite{Chaffey_2023}, while sufficient for extracting explicit gain bounds from SG separation, is not required for feedback stability itself; stability follows solely from strict separation~\cite{chen2025softhardSRG}. 
Theorem~\ref{thm:soft_stability}  inherently excludes systems with integrators or marginally stable dynamics. Furthermore, the homotopy condition introduces a significant computational overhead. This motivates the hard SG framework, which simultaneously accommodates unbounded causal systems and simplifies the separation test to a single, static geometric check.

\begin{theorem}[Hard SG stability~\cite{chen2025softhardSRG}]
\label{thm:hard_stability}
Let $H_1, H_2 : \mathcal{L}_{2e} \to \mathcal{L}_{2e}$ be causal operators with a well-posed feedback interconnection as in Fig.~\ref{fig:feedback}. If
\begin{align}
\dist\left(\SGe^{-1}(H_1),\, -\SGe(H_2)\right) > 0, \label{eq:hard_distance}
\end{align}
then the feedback interconnection is $\mathcal{L}_2$-stable.
\end{theorem}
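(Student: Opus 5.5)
The proof goes through an incremental-gain/IQC argument on truncated signals, in the spirit of classical conic-sector (Zames) theorems. It uses only the geometry of the hard SGs and the fact that every truncated gain–phase pair of $H_1$ and $H_2$ lies in the respective $\SGe$. Write $e_1 = u - y_2$, $y_1 = H_1 e_1$, $y_2 = H_2 y_1$ (with the appropriate input injection for the loop in Fig.~\ref{fig:feedback}). Since $\SGe$ is closed under conjugation and the truncated pairs $(\mathcal{P}_T e_1,\mathcal{P}_T y_1)$ and $(\mathcal{P}_T y_1,\mathcal{P}_T y_2)$ generate points in $\SGe(H_1)$ and $\SGe(H_2)$, the separation \eqref{eq:hard_distance} can be turned into a single quadratic inequality on truncated signals. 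That inequality will bound $\norm{\mathcal{P}_T y_1}$ and $\norm{\mathcal{P}_T e_1}$ by $\norm{\mathcal{P}_T u}$ with a constant independent of $T$.

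\textbf{Key steps.}
\begin{enumerate}
\item \emph{Reduce separation to a lower bound.} Let $\delta := \dist(\SGe^{-1}(H_1), -\SGe(H_2)) > 0$. For fixed $T$, set $e := \mathcal{P}_T e_1$, $y := \mathcal{P}_T y_1$, $w := \mathcal{P}_T y_2$. Causality gives $\mathcal{P}_T y_1 = \mathcal{P}_T H_1 \mathcal{P}_T e_1$, so $y = \mathcal{P}_T H_1 e$ and the pair $(e,y)$ defines a point of $\SGe(H_1)$; similarly $(y,w)$ defines a point of $\SGe(H_2)$.
\item \emph{Show the algebraic inequality.} Use the two-dimensional reduction of the SRG/SG calculus: the vectors $e, y, w$ span a subspace of dimension at most three, and the relevant inner products can be represented by complex numbers aligned to a common reference $y$. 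Since $e/y$ corresponds to $z_1^{-1}$ with $z_1 \in \SGe(H_1)$, and $w/y$ corresponds to $z_2 \in \SGe(H_2)$ up to conjugation, the aim is to show
\[
\norm{e + w} \ge \delta\,\norm{y}.
\]
Here $e + w = \mathcal{P}_T u$ by the loop equation. This is the step where the "$\pm$" symmetry in the definition of $\SGe$ and the chord-free argument of \cite{chen2025softhardSRG} enter. The angle between $e$ and $w$ relative to $y$ is not simply the difference of the individual phases. Only a triangle/law-of-cosines bound is available: the phase of $e+w$ relative to $y$ lies between the individual phases. I would handle this by choosing signs of the representatives so that $|z_1^{-1} + z_2| \ge \delta$ follows directly from the distance assumption.
\item \emph{Conclude stability.} The inequality gives $\norm{\mathcal{P}_T y_1} \le \delta^{-1}\norm{\mathcal{P}_T u}$ for all $T$. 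Then $\norm{\mathcal{P}_T e_1} \le \norm{\mathcal{P}_T u} + \norm{\mathcal{P}_T y_2}$, and $\mathcal{P}_T y_2$ is bounded by the same mechanism, using separation and the inverse relation on bounded pieces of $\SGe(H_2)$.
\item \emph{Pass to the limit.} Letting $T \to \infty$ with $u \in \Ltwo$ and using well-posedness (solutions exist in $\Ltwoe$) yields $e_1, y_1, y_2 \in \Ltwo$ with finite gain.
\end{enumerate}

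\textbf{Main obstacle.} The hard step is step 2: turning the set-distance hypothesis into the signal inequality $\norm{e+w} \ge \delta \norm{y}$ when $e$, $y$, $w$ are not collinear. I would first try to split $e$ and $w$ into components along $y$ and orthogonal to $y$, writing $e = (\Re z_1^{-1})\,y + e_\perp$ and $w = (\Re z_2)\,y + w_\perp$. The parallel component of $e+w$ is then controlled by the real parts. The orthogonal components can only partially cancel, and they are bounded below by $\bigl||\Im z_1^{-1}| - |\Im z_2|\bigr|\,\norm{y}$. This gives
\[
\norm{e+w} \ge \min_{\pm}\,|z_1^{-1} + \bar z_2^{\pm}|\,\norm{y} \ge \delta\,\norm{y},
\]
using the conjugate symmetry of $\SGe(H_2)$. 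A secondary difficulty is the case where $z_1 = 0$ or $z_2 = \infty$. I would treat these through the convention $0 \mapsto \infty$, together with the observation that $\delta > 0$ excludes both sets containing $\infty$.
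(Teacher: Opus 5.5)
The paper gives no proof of this theorem; it imports it from \cite{chen2025softhardSRG}. Taken as a self-contained argument, your core step is correct. Write $e=(\operatorname{Re} z_1^{-1})\,y+e_\perp$ and $w=(\operatorname{Re} z_2)\,y+w_\perp$ with $e_\perp,w_\perp\perp y$. Using $\norm{e_\perp+w_\perp}\ge|\norm{e_\perp}-\norm{w_\perp}|$ gives
\begin{align*}
\norm{e+w}^2\ge\bigl[(\operatorname{Re} z_1^{-1}+\operatorname{Re} z_2)^2+(|\operatorname{Im} z_1^{-1}|-|\operatorname{Im} z_2|)^2\bigr]\norm{y}^2 .
\end{align*}
If necessary, replace $z_2$ by $\bar z_2\in\SGe(H_2)$ so that $\operatorname{Im} z_2$ and $\operatorname{Im} z_1^{-1}$ have opposite signs; the bracket then equals $|z_1^{-1}+z_2|^2\ge\delta^2$. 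This yields $\norm{\mathcal{P}_T y_1}\le\delta^{-1}\norm{\mathcal{P}_T u}$ uniformly in $T$. It needs no chord property, no boundedness of $H_1,H_2$ and no homotopy, which is exactly why the hard test reduces to a single static check.

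Two small corrections. First, causality is not what places $(\mathcal{P}_T e_1,\mathcal{P}_T y_1)$ in $\SGe(H_1)$; the definition already uses truncations of $(u,Hu)$ with $u\in\Ltwoe$. Second, the degenerate cases you list ($z_1=0$, $z_2=\infty$) both mean $\mathcal{P}_T y_1=0$ and are trivial. The delicate case is $\mathcal{P}_T e_1=0\neq\mathcal{P}_T y_1$: it generates no point of $\SGe(H_1)$ and must be excluded, e.g.\ by causality together with $H_1(0)=0$.

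The genuine gap is Step 3. The inequality $\norm{e+w}\ge\delta\norm{y}$ controls only the common reference $y=\mathcal{P}_T y_1$. The signals $e$ and $w$ can each be large while nearly cancelling, so ``$\mathcal{P}_T y_2$ is bounded by the same mechanism'' does not follow. As written, you have only proved finite gain from $u$ to $y$. That is enough if this is what stability of Fig.~\ref{fig:feedback} means, but not for $e$.

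The missing idea is the one you state only at the very end. With $\dist$ read in the extended plane, $\delta>0$ forbids $\SGe^{-1}(H_1)$ and $-\SGe(H_2)$ from both being unbounded. So one of two things holds:
\begin{itemize}
\item $\gamma_2:=\sup\{|z|:z\in\SGe(H_2)\}<\infty$, giving $\norm{\mathcal{P}_T y_2}\le\gamma_2\norm{\mathcal{P}_T y_1}$;
\item $\beta:=\sup\{|w|:w\in\SGe^{-1}(H_1)\}<\infty$, giving $\norm{\mathcal{P}_T e_1}\le\beta\norm{\mathcal{P}_T y_1}$.
\end{itemize}
In either case the remaining signal is bounded through $\mathcal{P}_T e_1+\mathcal{P}_T y_2=\mathcal{P}_T u$. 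This case split is essential and depends on that convention. Under a purely Euclidean infimum, two unbounded sets can be at positive distance, and then your argument bounds $y_1$ but gives no bound on $e_1$.
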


{We adopt the notion of well-posedness from}~\cite{Megretski1997}. Unlike the soft case, Theorem~\ref{thm:hard_stability} does not require $\mathcal{L}_2$ boundedness of the systems in feedback connection. The strict separation condition~\eqref{eq:hard_distance} enforces uniform geometric separation over all truncation horizons, which guarantees $\mathcal{L}_2$ stability of the closed-loop interconnection.

\subsection{Hard Stability Certification via Soft SG Regions}

For $\mathcal{L}_2$-stable systems, it is known that the soft and hard SGs satisfy the inclusion $\SG(H) \subseteq \overline{\SGe(H)}$~\cite{chen2025softhardSRG}, where $\overline{\SGe(H)}$ is the closure of the hard SG. In general, this relation is one-sided and does not allow hard stability certification from soft SG. However, when the associated multiplier admits is positive-negative, Theorem~\ref{thm:main} strengthens this relation by establishing equivalence between soft and hard SG containment within a multiplier-defined region. This observation enables hard stability certification using regions computed from soft SG.

\begin{corollary}[Hard stability via soft SG regions]
\label{cor:hard_stability}
Let $H_1, H_2 : \mathcal{L}_{2e} \to \mathcal{L}_{2e}$ be causal, $\Ltwo$-stable operators with a well-posed feedback interconnection as in Fig.~\ref{fig:feedback}. Assume there exist positive-negative multipliers $\Pi_1$ and $\Pi_2$ such that $\SG(H_i) \subset \mathcal{S}(\Pi_i), \, i \in \{1,2\}$. If
\begin{align}
\dist\left( \mathcal{S}^{-1}(\Pi_1), -\mathcal{S}(\Pi_2)\right) > 0,\label{eq:hard_separation}
\end{align}
then the feedback interconnection is $\mathcal{L}_2$-stable.
\end{corollary}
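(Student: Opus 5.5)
The plan is to chain Theorem~\ref{thm:main} with the hard stability result, Theorem~\ref{thm:hard_stability}. Since each $H_i$ is causal and $\Ltwo$-stable and each $\Pi_i$ is positive-negative, Theorem~\ref{thm:main} upgrades the soft containment $\SG(H_i)\subset\mathcal{S}(\Pi_i)$ to the hard containment $\SGe(H_i)\subset\mathcal{S}(\Pi_i)$, for $i\in\{1,2\}$. No further analytic work is needed at this stage; it is exactly the soft-to-hard passage supplied by Lemma~\ref{lem:iqc_equiv} through the IQC--SG correspondences.

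Next we transfer the containments through the inversion and negation maps. Here $\mathcal{S}^{-1}(\Pi_1)$ is read, as in \eqref{eq:inverse_srg}, as the image of $\mathcal{S}(\Pi_1)$ under $z\mapsto z^{-1}$, with $0\mapsto\infty$. Both $z\mapsto z^{-1}$ and $z\mapsto -z$ are pointwise maps, so they preserve inclusion. Hence $\SGe^{-1}(H_1)\subseteq\mathcal{S}^{-1}(\Pi_1)$ and $-\SGe(H_2)\subseteq-\mathcal{S}(\Pi_2)$. One point needs care: if $0\in\SGe(H_1)$, then $\infty\in\SGe^{-1}(H_1)$. Because $0\in\mathcal{S}(\Pi_1)$ as well, we then also have $\infty\in\mathcal{S}^{-1}(\Pi_1)$, so the inclusion still holds in $\mathbb{C}\cup\{\infty\}$.

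The distance in \eqref{eq:set_distance} is an infimum over pairs, so it is monotone non-increasing under enlargement of either set. This gives
\begin{align*}
\dist\bigl(\SGe^{-1}(H_1),-\SGe(H_2)\bigr)\;\ge\;\dist\bigl(\mathcal{S}^{-1}(\Pi_1),-\mathcal{S}(\Pi_2)\bigr)>0 .
\end{align*}
The point at infinity needs a check here too. The convention in the paper makes the distance zero whenever both sets contain $\infty$. If both hard sets contained $\infty$, then both multiplier sets would as well, and the hypothesis \eqref{eq:hard_separation} would fail. So the strict inequality is inherited in every case.

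With this bound, \eqref{eq:hard_distance} holds for the causal, well-posed interconnection, and Theorem~\ref{thm:hard_stability} yields $\Ltwo$-stability. The only delicate step I expect is this handling of $\infty$ and of the closure convention under inversion; everything else is inclusion bookkeeping. Note that no homotopy in $\tau$ is needed, because the hard theorem uses the static separation test.
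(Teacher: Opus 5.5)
Your proposal is correct and follows the paper's proof essentially step for step. Theorem~\ref{thm:main} upgrades $\SG(H_i)\subset\mathcal{S}(\Pi_i)$ to $\SGe(H_i)\subset\mathcal{S}(\Pi_i)$, the inclusions pass through inversion and negation, monotonicity of the infimum gives the distance bound, and Theorem~\ref{thm:hard_stability} concludes; your extra bookkeeping for the point at infinity is a sound refinement the paper leaves implicit, and it is not vacuous, since $0$ lies in the interior of any positive-negative $\mathcal{S}(\Pi_1)$, so $\infty\in\mathcal{S}^{-1}(\Pi_1)$ always, while $-\mathcal{S}(\Pi_2)$ is a bounded disk.
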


\begin{proof}
By Theorem~\ref{thm:main}, the soft containments $\SG(H_i) \subset \mathcal{S}(\Pi_i)$ imply the corresponding hard containments $\SGe(H_i) \subset \mathcal{S}(\Pi_i)$ for $i \in \{1,2\}$. Consequently,
\begin{align*}
    -\SGe(H_2) \subseteq -\mathcal{S}(\Pi_2), \qquad \SGe^{-1}(H_1) \subseteq \mathcal{S}^{-1}(\Pi_1).
\end{align*}
The separation condition~\eqref{eq:hard_separation} therefore yields
\begin{align*}
    \dist\!\big(-\SGe(H_2),\, \SGe^{-1}(H_1)\big)\! \geq\!\dist\!\big(-\mathcal{S}(\Pi_2), \mathcal{S}^{-1}(\Pi_1)\big)\!>\! 0,
\end{align*}
$\mathcal{L}_2$-stability follows from Theorem~\ref{thm:hard_stability}.
\end{proof}

 The practical benefit of Corollary~\ref{cor:hard_stability} is that one can compute regions for soft SGs and then apply Theorem~\ref{thm:hard_stability} for stability certification, avoiding both the $P \succeq 0$ constraint in hard SG computation and the homotopy sweep in soft stability tests. In general, the conservatism of regional containment depends on how well the chosen region approximates the true SG. To reduce it, one can certify containment in several regions intersect the results. When tight margin estimates are required, direct hard SG estimation methods~\cite{krebbekx2025computing,nauta2025computable} remain available.


\section{Numerical Examples}\label{sec:examples}
In this section, we illustrate the proposed SG-based stability analysis through representative numerical examples. 

\subsection{Circular Containment}
 
Consider the following $\mathcal{L}_2$-stable LTI systems
\begin{align}\label{eqn:sysproof}
H_1 = \begin{bmatrix} \tfrac{1}{(s+5)^2} & \tfrac{3}{(s+2)^2} \\ \tfrac{2}{s+10} & \tfrac{4}{(s+5)^2} \end{bmatrix}, \,
H_2 = \begin{bmatrix} \tfrac{1}{s+1} & \tfrac{0.3}{s+2} \\ -\tfrac{0.2}{s+3} & \tfrac{1}{s+1} \end{bmatrix}.
\end{align}
For each system, we compute both the soft and hard SGs, together with circular multiplier regions $\mathcal{S}(\Pi_1)$ and $\mathcal{S}(\Pi_2)$. Specifically, the region $\mathcal{S}(\Pi_1)$ has center $c = 0.1$ and radius $r = 0.78$ and is shown in Fig.~\ref{fig:example_H1}, while $\mathcal{S}(\Pi_2)$ has center $c = 0.52$ and radius $r = 0.75$ and is depicted in Fig.~\ref{fig:example_H2}. Both multiplier regions satisfy the conditions of Proposition~\ref{prop:circular}. Figure~\ref{fig:examples} illustrates the geometric containments
\begin{align*}
    \SG(H_i) \subset \mathcal{S}(\Pi_i), \quad\SGe(H_i) \subset \mathcal{S}(\Pi_i),
\end{align*}
which are consistent with the theoretical predictions of Theorem~\ref{thm:main}. Notably, the hard SG occupies a larger region of the complex plane than its soft counterpart, yet both are strictly contained within the predicted multiplier region $\mathcal{S}(\Pi_i)$. The soft SGs are obtained via frequency-domain sampling\cite{krebbekxGraphicalAnalysisNonlinear2025}, whereas the hard SGs are computed using the numerical algorithm proposed in~\cite{krebbekx2025computing}. Closed-loop stability of the feedback interconnection follows from the strict separation condition \eqref{eq:hard_separation} which is observed in Fig.~\ref{fig:example_H3}. The stability margin, $\vartheta_{\Pi_{1,2}}\! =\! \dist\!\left(\mathcal{S}^{-1}(\Pi_1),-\mathcal{S}(\Pi_2)\right)$, depends on the particular choice of multipliers and is, in general, smaller than the margin obtained from the exact hard SGs. In the present example, the margin can be computed directly from the endpoints of the circular regions, yielding $\vartheta_{\Pi_{1,2}} = 0.2$.

\begin{figure}[ht]
    \centering
\begin{subfigure}{.5\linewidth}
     \centering 
    \includegraphics[width=1\linewidth]{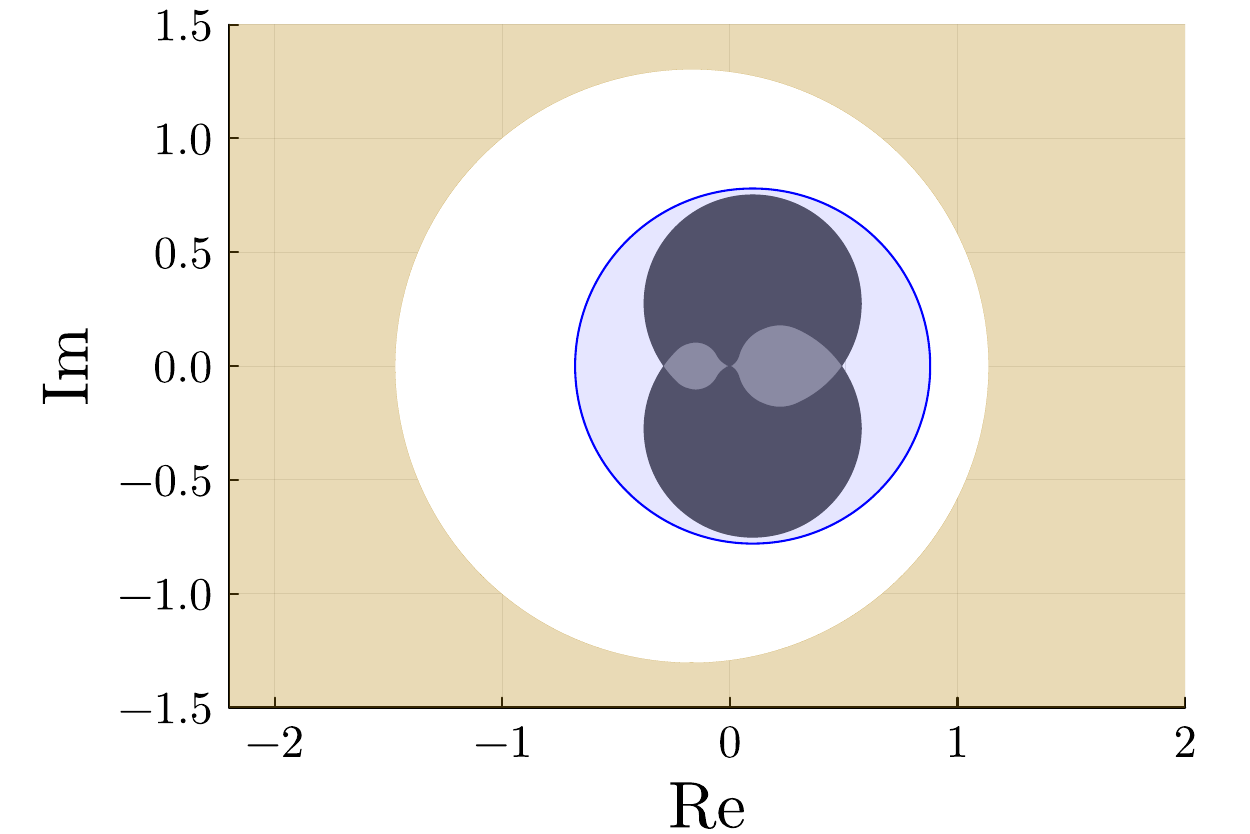}
    \caption{}
    \label{fig:example_H1}
\end{subfigure}%
\begin{subfigure}{.5\linewidth}
     \centering 
    \includegraphics[width=1\linewidth]{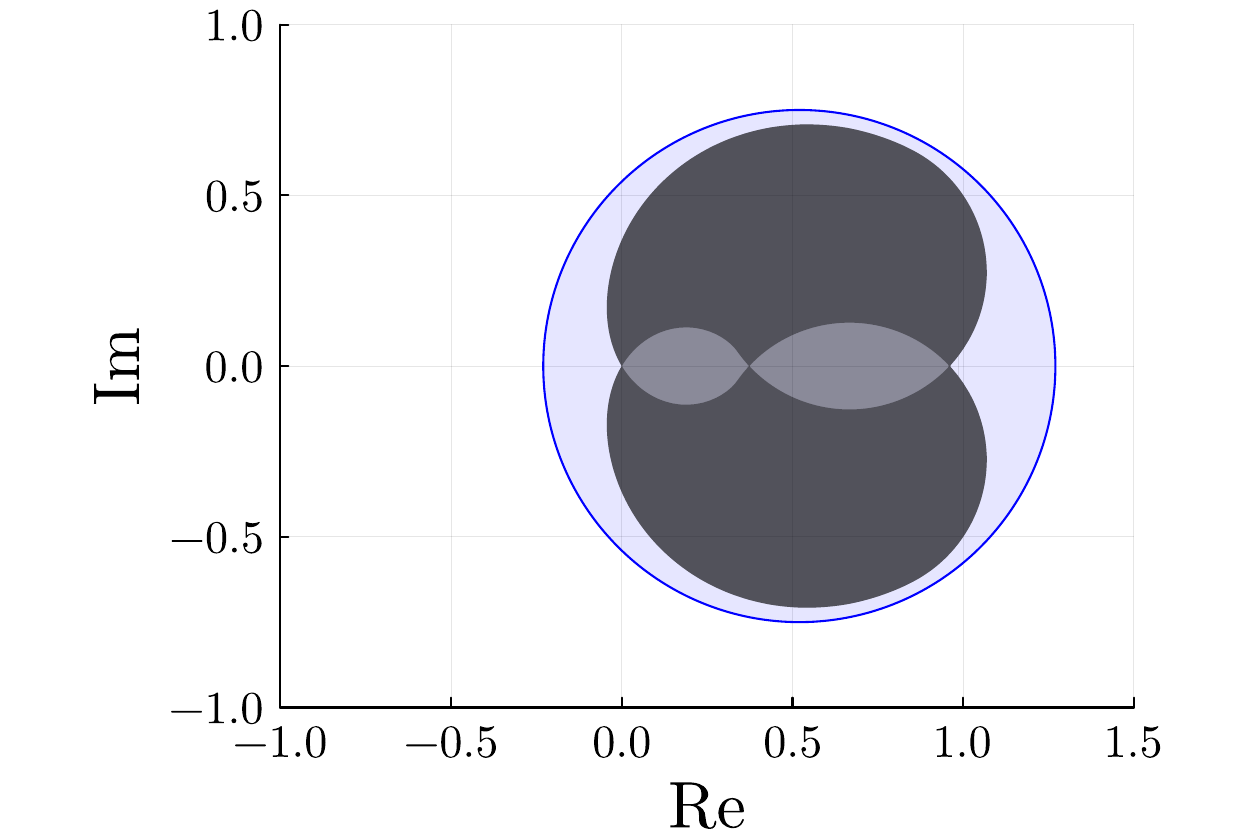}
    \caption{}
    \label{fig:example_H2}
\end{subfigure}%

\begin{subfigure}{.6\linewidth}
     \centering 
    \includegraphics[width=1\linewidth]{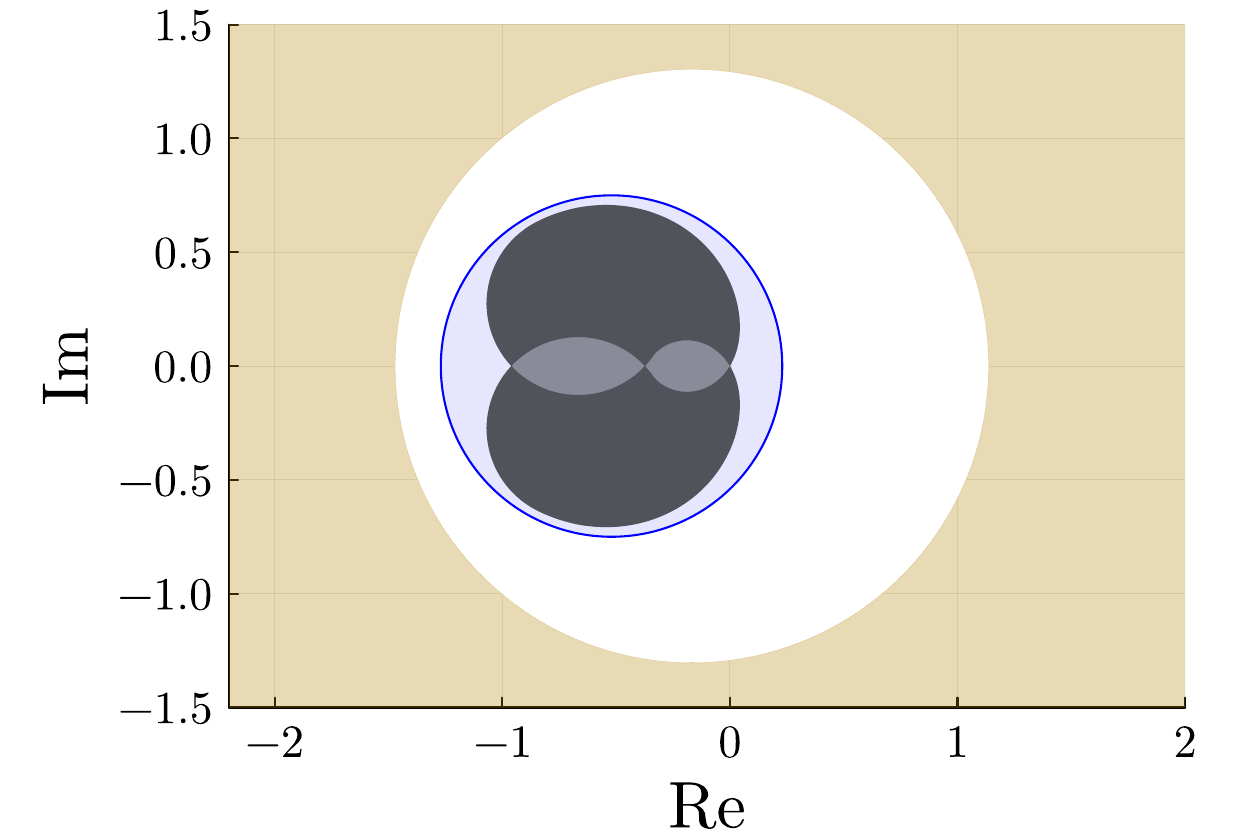}
    \caption{}
    \label{fig:example_H3}
\end{subfigure}%
\caption{
(a) For system $H_1$, soft SG (dark gray) and hard SG (light gray), together with the multiplier region $\mathcal{S}(\Pi_1)$ (blue) and its inverse $\mathcal{S}^{-1}(\Pi_1)$ (yellow). 
(b) For system $H_2$, soft SG (dark gray) and hard SG (light gray), along with the multiplier region $\mathcal{S}(\Pi_2)$ (blue). 
(c) Corresponding sets used for stability analysis: $-\SG(H_2)$ (dark gray), $-\SGe(H_2)$ (light gray), $-\mathcal{S}(\Pi_2)$ (blue), and $\mathcal{S}^{-1}(\Pi_1)$ (yellow).}
    \label{fig:examples}
\end{figure} 

\subsection{Conic Containment}
\label{sec:conic_example}
To illustrate the advantage of conic over circular containment, consider $H_1$ and $H_2$ in \eqref{eqn:sysproof}. Figure~\ref{fig:conic_example} compares the ellipsoidal region with its smallest enclosing disk. The disk must match the maximum imaginary excursion, leading to excess coverage along the real axis where the SG is narrow. The ellipse exploits this nonuniformity, reducing the area by approximately $21\%$ for $H_1$. For $H_2$, the reduction is more modest, around $9\%$ relative to the circular multiplier. 
 
\begin{figure}[ht]
\centering
\begin{subfigure}{.5\linewidth}
     \centering 
    \includegraphics[width=1\linewidth]{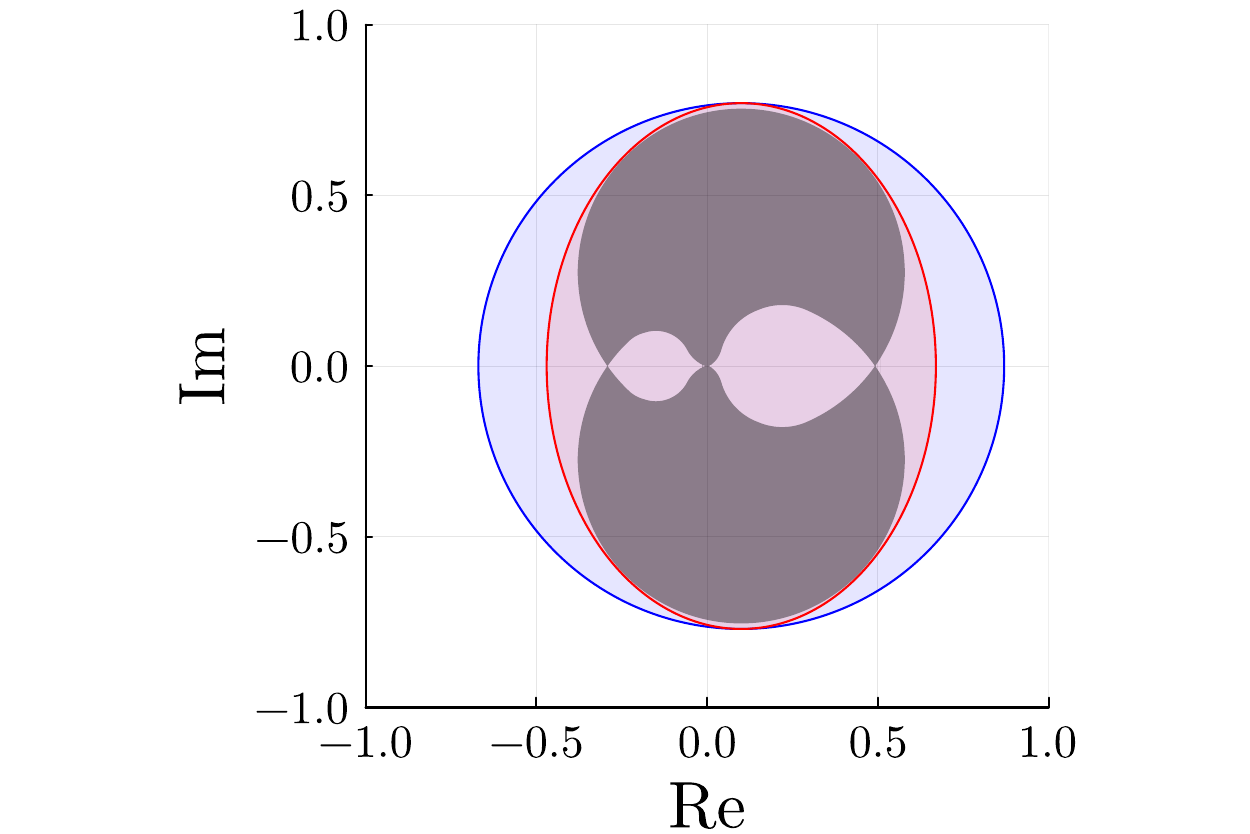}
    \caption{}
\end{subfigure}%
\begin{subfigure}{.5\linewidth}
     \centering 
    \includegraphics[width=1\linewidth]{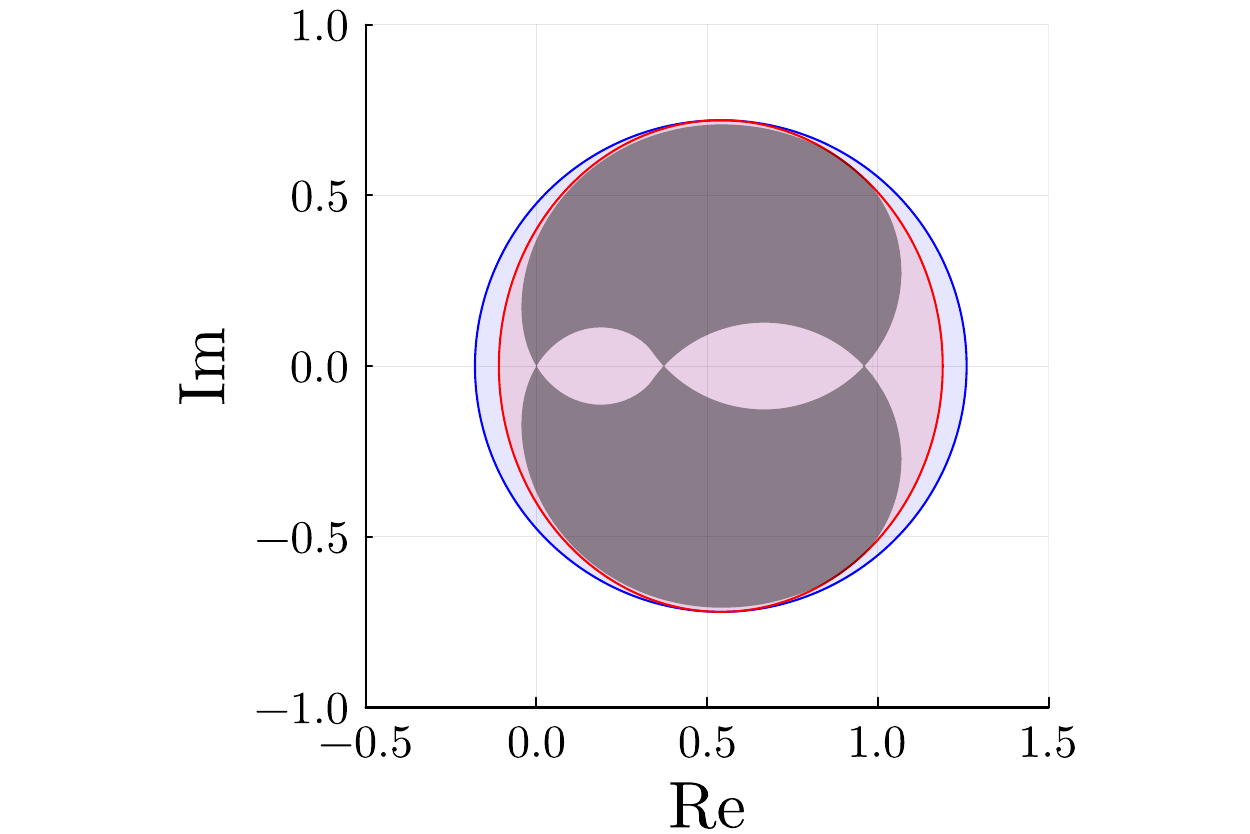}
    \caption{}
\end{subfigure}%
\caption{(a) System $H_1$: soft SG (gray), circular region $\mathcal{S}(\Pi_1)$ (blue), and ellipsoidal region $\mathcal{C}(\Theta_1)$ (red).
(b) System $H_2$: soft SG (gray), circular region $\mathcal{S}(\Pi_2)$ (blue), and ellipsoidal region $\mathcal{C}(\Theta_2)$ (red).}
\label{fig:conic_example}
\end{figure}

\subsection{Scalability Demonstration}
\label{sec:scalability}

To quantify the computational advantages of Corollary~\ref{cor:lmi}, we compare the soft and hard LMI formulations across systems of increasing dimension. We consider block-diagonal MIMO systems constructed from first-order subsystems
\begin{align*}
    H_k(s) = \tfrac{1}{s + a_k}, \qquad a_k \in [0.1, 0.3].
\end{align*}
Block-diagonal structures model systems where subsystems are intrinsically decoupled and interact only through a shared interconnection.
Therefore, the selected system reflects physical modularity of several systems~\cite{Baron2025decentralized,baronprada2026powersystems}. We consider system dimensions $m \in \{10,25, 50, 75, 100, 125, 150, 200, 250, 300\}$, and, solve both the soft LMI~\eqref{eq:soft_lmi} and the hard LMI augmented with the constraint $P \succeq 0$, using interior disk multipliers ($\Pi_{\textrm{int}}$) satisfying the positive--negative property. All experiments are conducted using SeDuMi~\cite{sturm1999using} and MOSEK~\cite{MOSEKbook}.

Figure~\ref{fig:scalability} summarizes the {computational performance} of the proposed approach. The soft formulation outperforms the hard formulation, with speedups defined as the ratio between the runtime of the hard and soft LMIs. These speedups are not monotonic in problem size, ranging from  $1.18\times$ at $m=10$ to $1.79\times$ at $m=75$ using SeDuMi, corresponding to runtime reductions of roughly 15--44\%. The non-monotonic nature of the speedup factor likely stems from the varying efficiency of the solvers internal heuristics.
Since runtimes increase with state dimension, these reductions can translate into minutes to hours of computational savings. These results show that Theorem~\ref{thm:main} exploits the $J$-spectral factorization allowing soft SG computation while certifying hard containment, yielding significant computational savings for large-scale systems.
 
\begin{figure}[ht]
\centering
\includegraphics[width=1\columnwidth]{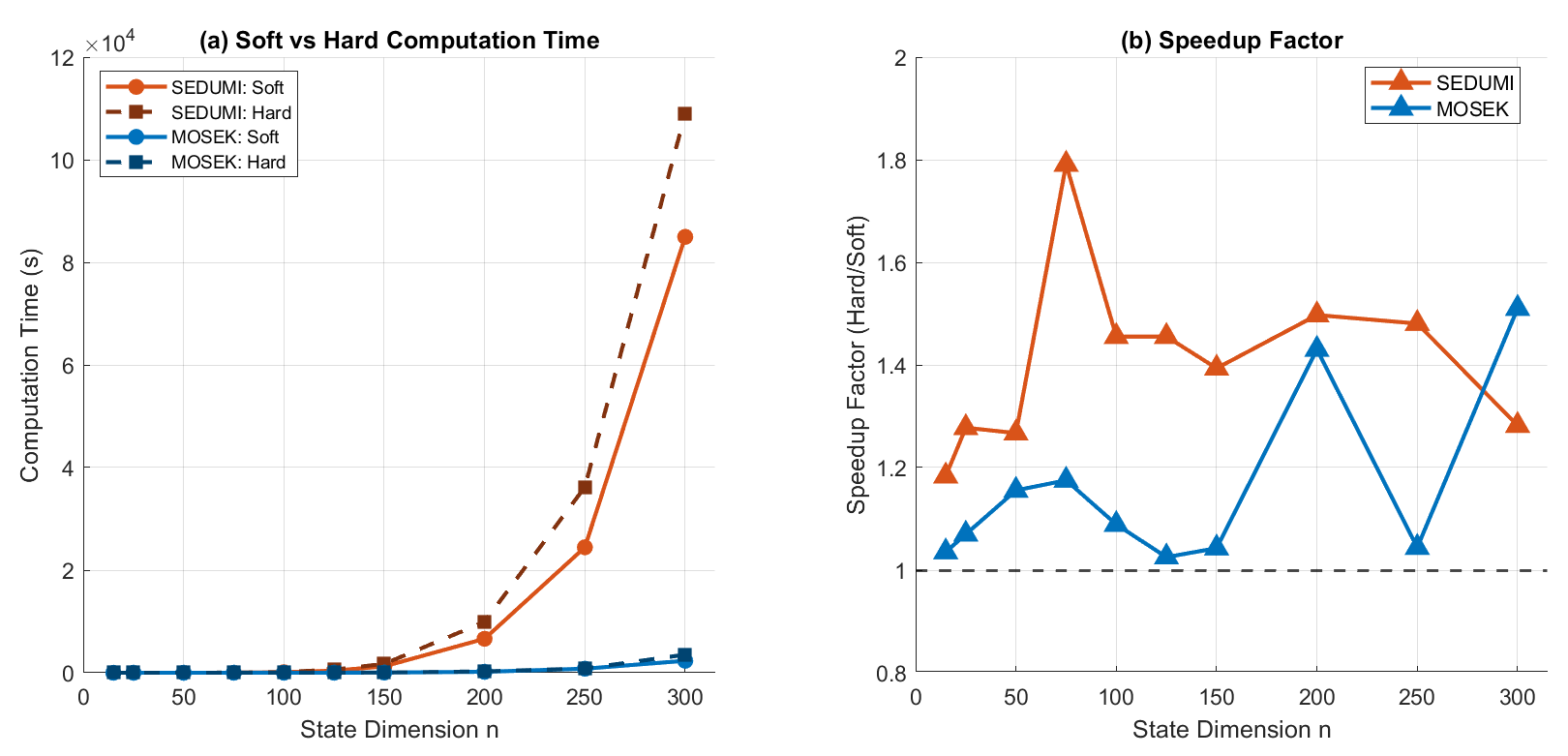}
\caption{Scalability results for systems with state dimension 
$m \in \{10, 25, \ldots, 300\}$. (a) Computation time for soft 
($P$ free) and hard ($P \succeq 0$) formulations. (b) Speedup factor. }
\label{fig:scalability}
\end{figure}

\section{Conclusion}

This paper establishes that soft and hard SG containment coincide for positive-negative multipliers, and extends the SG containment framework from circular to conic regions through an h-convexity characterization and a frequency-domain certification condition.  Together, these results represent a further step for SG-based stability certification practical for large-scale LTI systems by eliminating both the $P \succeq 0$ constraint and the homotopy sweep, while opening the door to non-circular exclusion specifications.
 
Several directions remain open.  First, extending the conic containment results to the hard SG setting would complete the parallel between the circular and conic frameworks.  Second, applying the framework to black-box or data-driven system representations, where transfer function models are unavailable, is a natural next step for practical deployment.

\bibliographystyle{IEEEtran}
\bibliography{bibtex/TIE}

@article{pates2021scaled,
  title={The scaled relative graph of a linear operator},
  author={Pates, Richard},
  journal={arXiv preprint arXiv:2106.05650},
  year={2021}
}

@article{Chaffey_2023,
title={Graphical Nonlinear System Analysis},
volume={68},
ISSN={2334-3303},
DOI={10.1109/tac.2023.3234016},
number={10},
journal={IEEE Transactions on Automatic Control},
publisher={Institute of Electrical and Electronics Engineers (IEEE)},
author={Chaffey, Thomas and Forni, Fulvio and Sepulchre, Rodolphe},
year={2023},
month=oct, pages={6067–6081} }

@book{ryu2022large,
title={Large-Scale Convex Optimization: Algorithms \& Analyses via Monotone Operators},
author={Ryu, E.K. and Yin, W.},
isbn={9781009191067},
year={2022},
publisher={Cambridge University Press}
}

@book{zhou1998,
  title={Essentials of robust control},
  author={Zhou, Kemin and Doyle, John Comstock},
  volume={104},
  year={1998},
  publisher={Prentice hall Upper Saddle River, NJ}
}

@article{Baron2025SRG,
  title={Stability results for {MIMO} {LTI} systems via Scaled Relative Graphs},
  author={Baron-Prada, Eder and Padoan, Alberto and Anta, Adolfo  and Dörfler, Florian},
  journal={arXiv preprint arXiv:2503.13583},
  year={2025}
}

@article{Baron2025decentralized,
  author={Baron-Prada, Eder and Anta, Adolfo and Dörfler, Florian},
  journal={IEEE Control Systems Letters}, 
  title={On Decentralized Stability Conditions Using Scaled Relative Graphs}, 
  year={2025},
  volume={9},
  number={},
  pages={691-696},}

@article{chen2025softhardSRG,
      title={Soft and Hard Scaled Relative Graphs for Nonlinear Feedback Stability}, 
      author={Chao Chen and Sei Zhen Khong and Rodolphe Sepulchre},
      year={2025},
      journal={ArXiv:2504.14407},
}

@article{Carrasco_2018,
   title={Conditions for the equivalence between {IQC} and graph separation stability results},
   volume={92},
   ISSN={1366-5820},
   number={12},
   journal={International Journal of Control},
   publisher={Informa UK Limited},
   author={Carrasco, Joaquin and Seiler, Peter},
   year={2018},
   month=apr, pages={2899–2906} }

@article{krebbekx2025computing,
      title={Computing the Hard Scaled Relative Graph of {LTI} Systems}, 
      author={Julius P. J. Krebbekx and Eder Baron-Prada and Roland Tóth and Amritam Das},
      year={2025},
      journal={ArXiv:2511.17297},
}

@article{degroot2025dissipativity,
      title={A Dissipativity Framework for Constructing Scaled Graphs}, 
      author={Timo de Groot and Maurice Heemels and Sebastiaan van den Eijnden},
      year={2025}, 
      journal={ArXiv:2507.08411},
}

@INPROCEEDINGS{Sebastian2025_signed,
  author={van den Eijnden, Sebastiaan and Chen, Chao and Scheres, Koen and Chaffey, Thomas and Lanzon, Alexander},
  booktitle={2025 IEEE 64th Conference on Decision and Control (CDC)}, 
  title={On phase in scaled graphs}, 
  year={2025},
  volume={},
  number={},
  pages={3595-3600}}

@INPROCEEDINGS{Carrasco_2015_IQCsseparation,
  author={Carrasco, Joaquin and Seiler, Peter},
  booktitle={2015 54th IEEE Conference on Decision and Control (CDC)}, 
  title={Integral quadratic constraint theorem: A topological separation approach}, 
  year={2015},
  volume={},
  number={},
  pages={5701-5706},
  doi={10.1109/CDC.2015.7403114}}

@ARTICLE{Megretski1997,
  author={Megretski, A. and Rantzer, A.},
  journal={IEEE Transactions on Automatic Control}, 
  title={System analysis via integral quadratic constraints}, 
  year={1997},
  volume={42},
  number={6},
  pages={819-830},
  doi={10.1109/9.587335}}

@article{nauta2025computable,
      title={Computable Characterisations of Scaled Relative Graphs of Closed Operators}, 
      author={Talitha Nauta and Richard Pates},
      year={2025},
      eprint={2511.08420},
      journal={arXiv:2511.08420}
}

@article{baronprada2026powersystems,
  author={Baron-Prada, Eder and Anta, Adolfo and Dörfler, Florian},
  journal={IEEE Transactions on Power Systems}, 
  title={Stability Analysis of Power-Electronics-Dominated Grids Using Scaled Relative Graphs}, 
  year={2026},
  volume={},
  number={},
  pages={1-15},
  doi={10.1109/TPWRS.2026.3674752}}

@article{krebbekxGraphicalAnalysisNonlinear2025,
  title={Graphical Analysis of Nonlinear Multivariable Feedback Systems},
  author={Krebbekx, Julius P. J. and T{\'o}th, Roland and Das, Amritam},
  note={Submitted to IEEE-TAC},
  year={2025},
  journal={arXiv:2507:16513}
}

@book{nesterov1994interior,
  title={Interior-point polynomial algorithms in convex programming},
  author={Nesterov, Yurii and Nemirovskii, Arkadii},
  year={1994},
  publisher={SIAM}
}

@book{MOSEKbook,
  title={The MOSEK Optimization Toolbox for MATLAB Manual},
  author={{MOSEK ApS}},
  year={2023},
  note={Version 10.1}
}

@article{sturm1999using,
  title={Using SeDuMi 1.02, a MATLAB toolbox for optimization over symmetric cones},
  author={Sturm, Jos F},
  journal={Optimization methods and software},
  year={1999},
  publisher={Taylor \& Francis}
}

@ARTICLE{Seiler2015,
  author={Seiler, Peter},
  journal={IEEE Transactions on Automatic Control}, 
  title={Stability Analysis With Dissipation Inequalities and Integral Quadratic Constraints}, 
  year={2015},
  volume={60},
  number={6},
  pages={1704-1709}}

@article{IwasakiHara2005,
  author    = {T. Iwasaki and S. Hara},
  title     = {Generalized {KYP} Lemma: Unified Frequency Domain
               Inequalities with Design Applications},
  journal   = {IEEE Transactions on Automatic Control},
  volume    = {50},
  number    = {1},
  pages     = {41--59},
  year      = {2005}
}

@article{GOLDMAN2005632,
title = {Curvature formulas for implicit curves and surfaces},
journal = {Computer Aided Geometric Design},
volume = {22},
number = {7},
pages = {632-658},
year = {2005},
note = {Geometric Modelling and Differential Geometry},
issn = {0167-8396},
author = {Ron Goldman},
}

@article{Gupta_2023_elipsoidal,
title = {Data-driven IQC-Based Uncertainty Modelling for Robust Control Design},
journal = {IFAC-PapersOnLine},
volume = {56},
number = {2},
pages = {4789-4795},
year = {2023},
note = {22nd IFAC World Congress},
issn = {2405-8963},
doi = {https://doi.org/10.1016/j.ifacol.2023.10.1244},
author = {Vaibhav Gupta and Elias Klauser and Alireza Karimi},
}

@book{do2016differential,
  title={Differential geometry of curves and surfaces: revised and updated second edition},
  author={Do Carmo, Manfredo P},
  year={2016},
  publisher={Courier Dover Publications}
}

\end{document}